\newtheorem{thm}{Theorem}
\newtheorem{lemma}[thm]{Lemma}
\newtheorem{prop}[thm]{Proposition}
\newtheorem{cor}[thm]{Corollary}
\theoremstyle{definition}
\newtheorem{dfn}{Definition}
\theoremstyle{remark}
\newcommand{\ZF}{\text{ZF}}
\newcommand{\ZFC}{\text{ZFC}}
\newcommand{\DC}{\text{DC}}
\newcommand{\HOD}{\text{HOD}}
\newcommand{\Reals}{\mathbb{R}}
\newcommand{\VitEq}{\mathrel{\mathbb{E}_0}} 
\newcommand{\EOne}{\mathrel{\mathbb{E}_1}}
\newcommand{\EFinPerm}{\mathrel{E_{S_{<\infty}}}} 
\newcommand{\dom}[1]{\text{dom}(#1)}
\newcommand{\nat}{\mathbb{N}}
\newcommand{\Coll}[2]{\text{Coll}({#1}, {#2})}
\newcommand{\card}[1]{\lvert{#1}\rvert}
\newcommand{\balanceEq}{\equiv_b}
\newcommand{\SE}{\mathrel{\text{SE}}}
\newcommand{\LessSE}{<_{\text{SE}}}
\newcommand{\LeqSE}{\le_{\text{SE}}}
\newcommand{\sortLex}{\le_{\text{lex}}^\uparrow} 
\newcommand{\revSortLex}{\ge_{\text{lex}}^\uparrow} 
\newcommand{\pbelow}{\precsim} 
\newcommand{\pabove}{\succsim}
\newcommand{\psbelow}{\prec} 
\newcommand{\pless}{\lesssim} 
\newcommand{\psless}{<} 
\title{On Strongly Equitable Social Welfare Orders Without the Axiom of Choice}
\author{Luke Serafin}
\date{\today}
\begin{document}
\maketitle

\begin{abstract}
Social welfare orders seek to combine the disparate preferences of an infinite sequence of generations into a single, societal preference order in some reasonably-equitable way.
In~\cite{strong-equity}, Dubey and Laguzzi study a type of social welfare order which they call SEA, for strongly equitable and (finitely) anonymous.
They prove that the existence of a SEA order implies the existence of a set of reals which does not have the Baire property, and observe that a nonprincipal ultrafilter over $\nat$ can be used to construct a SEA order.
Questions arising in their work include whether the existence of a SEA order implies the existence of either a set of real numbers which is not Lebesgue-measurable or of a nonprincipal ultrafilter over $\nat$.
We answer both these questions, the solution to the second using the techniques of geometric set theory as set out by Larson and Zapletal in~\cite{larson-zapletal-GST}.
The outcome is that the existence of a SEA order does imply the existence of a set of reals which is not Lebesgue-measurable, and does not imply the existence of a nonprincipal ultrafilter over $\nat$.
\end{abstract}


\section{Overview}

\subsection{Social Welfare Orders} \label{SWO-intro}

Let $\langle Y, \le \rangle$ be a totally-ordered set.
In theoretical economics we may think of $Y$ as a collection of utilities, where $x \le y$ means that the utility of $x$ is below that of $y$.
Different entities (perhaps individuals or generations) may derive varying utilities from the same societal choice.
For instance, consider a pair of policies $P_1$ and $P_2$ and a pair of entities $x_1$ and $x_2$.
Perhaps $P_1$ allows individuals to receive stock as compensation from a corporation without needing to pay tax until it is sold, while $P_2$ taxes stock grants at the market rate.
If $x_1$ is a wealthy individual and $x_2$ a government employee, it is likely that $x_1$ derives high utility from $P_1$ and low utility from $P_2$, while for $x_2$ the situation may be reversed due to the public projects which the
extra tax revenue from $P_2$ enables.
Or perhaps $x_1$ and $x_2$ are generations, with $x_2$ coming into existence after $x_1$, and $P_1$ is a lax policy on fishing rights in a certain region while $P_2$ is a stricter and more conservative policy.
It may be that $x_1$ derives considerable utility from the fishing revenue which can be obtained under $P_1$, but that $x_2$ derives low utility from $P_1$ due to depleted fish populations and would be much better off under $P_2$.

Since different individuals or generations benefit variably under a given choice of policy, how can conscientious policymakers decide which of a pair of policies is preferable for the entire population?
We shall make the idealization that the collection of individuals or generations is countably infinite, and represent it by $\nat$.
In this case the utilities derived from a given policy by all members of the population or by all generations can be represented as an element of $Y^\nat$.
A prelinear order on $Y^\nat$, interpreted as the preference order of results from different policies, is called a \emph{social welfare order}.
Note that unlike in the case of utilities, two distinct policies can be considered equally preferred by a social welfare order.
When $\pbelow$ denotes a prelinear order, we denote the corresponding relation of equal preference by $\approx$, which can be defined by ${\approx} = {\pbelow} \cap {\pabove}$.
The strict order notation $x \psbelow y$ means $x \pbelow y$ and $x \not\approx y$.
Of course, not all social welfare orders are equally desirable, and there are various properties which one might reasonably expect the preference order of a conscientious policymaker to possess.
The properties with which we shall be concerned are finite anonymity and strong equity.
The motivation for focusing on these principles is simply that Dubey and Laguzzi \cite{strong-equity} leave open two questions about social welfare orders which satisfy finite anonymity and strong equity.
For further details on social welfare orders and their properties, see \cite[sec. 6]{laguzzi}, \cite{strong-equity} and the references therein.

\begin{dfn}[{\cite{strong-equity}}]
A social welfare order is \emph{finitely anonymous} if and only if the labels given to individuals or generations don't affect the outcome, at least if we only change finitely many labels.
More precisely, an order $\pbelow$ is finitely anonymous if and only if for every finitely supported permutation $\pi$ of $\nat$ and for every $y \in Y^\nat$, $y \approx y \circ \pi$.
\end{dfn}

Finite anonymity has no relation to the utilities derived by individuals or generations, and is desirable simply so that certain distinguished individuals aren't given preference merely because they are distinguished.
The next property we consider does take utilities into account.
Intuitively, it favours equity by preferring scenarios where individuals have utilities which are closer together to those where they are farther apart, regardless of any quantitative differences between utilities.

\begin{dfn}[{\cite{daspremont-gevers-collective-choice}}]
The relation $\SE$ on $Y^\nat$ is defined by $x \SE y$ if and only if there are $i, j \in \nat$ such that $x \restriction (\nat \setminus \{i,j\}) = y \restriction (\nat \setminus \{i,j\})$ and $x(i) < y(i) < y(j) < x(j)$.
A social welfare order $\pbelow$ is \emph{strongly equitable} if and only if ${\SE} \subseteq {\psbelow}$, or equivalently if and only if ${\LessSE} \subseteq {\psbelow}$, where $\LessSE$ is the transitive closure of $\SE$.
\end{dfn}

\begin{dfn}[{\cite{strong-equity}}]
A social welfare order which is both strongly equitable and finitely anonymous is called a \emph{SEA order}.
\end{dfn}

\subsection*{Independence results}

While thinking about the possibility of actually using social welfare orders for policy decisions, at least in idealized scenarios, economists have noticed that many combinations of properties cannot be realized without
assuming a portion of the axiom of choice (e.g. \cite{zame-intergenerational-equity}), and in this sense are non-constructive.
In particular, the authors of~\cite{strong-equity} observe that the existence of a SEA order implies the existence of a set of reals which does not have the Baire property.
Since there are models\footnote{Assuming that $\ZF$ is consistent; we use $\ZFC$ in the metatheory, so if it is inconsistent everything is trivial.} of $\ZF + \DC$ in which every set of reals has the Baire property
(see, e.g. \cite{shelah-take-solovays-inaccessible-away}), $\ZF + \DC$ does not imply the existence of a SEA order.
The authors of~\cite{strong-equity} observe that the existence of a nonprincipal ultrafilter over $\nat$ is enough to guarantee the existence of a SEA order, and since it is well-known that the existence of such a nonprincipal
ultrafilter does not imply the full axiom of choice (see, e.g. \cite{barren-extension}), this shows that the existence of SEA orders is weaker than the axiom of choice.
But does the existence of SEA orders imply the existence of nonprincipal ultrafilters over $\nat$?
Dubey and Laguzzi in~\cite{strong-equity} leave this open, and we answer this question negatively, assuming the consistency of an inaccessible cardinal.
Another question left open in~\cite{strong-equity} is whether the existence of a SEA order implies the existence of a non-Lebesgue-measurable set of reals; we answer this question positively, with no large cardinal hypothesis needed.
Our work leaves the question of whether large cardinals are necessary open, though note that the existence of a single inaccessible cardinal is virtually the weakest large cardinal hypothesis which is considered.

\textit{Acknowledgement} This problem was suggested to me by Paul Larson, and is here solved using the techniques of geometric set theory developed by him and Jindřich Zapletal in \cite{larson-zapletal-GST}.
I am grateful to my advisor, Justin Moore, for a great deal of advice and encouragement without which these results would not have come to fruition.
This research was supported in part by NSF grants DMS–1854367 and DMS-2451350.

\section{Preliminaries}

The terminology and basic concepts which we use from set theory and descriptive set theory are mostly standard, and the reader is referred to the literature for more details, for example to~\cite{kunen} and~\cite{jech} for forcing
and ordinals, and to~\cite{kechris-classical} and~\cite{gao-invariant-descriptive} for Polish spaces and Borel reducibility of equivalence relations.
For large cardinals the standard source is~\cite{kanamori-higher-infinite}; we only need the notion of an inaccessible cardinal.
Our base theory for independence results is $\ZF + \DC$.
In the metatheory we assume full $\ZFC$.
For details on convergence with respect to filters and ultrafilters see~\cite[p. 51 ff]{engelking} and \cite{maciver-filter-convergence}.
The symmetric Solovay model $W$ is a model of $\ZF + \DC$ in which all sets of reals have standard regularity properties; in particular they have the Baire property and are Lebesgue measurable.
This model is discussed in~\cite{kanamori-higher-infinite}, \cite{jech}, and \cite{larson-zapletal-GST}.
Following Larson and Zapletal, we take the definition of $W$ to be $\HOD(\Reals \cup V)$ as evaluated in the Lévy collapse of an inaccessible cardinal.
We fix this inaccessible cardinal and denote it by $\kappa$ throughout the sequel.

Terminology and notation surrounding order relations isn't entirely standard, so we give some definitions.
\begin{dfn}
Let $Y$ be a set and ${\pbelow} \subseteq Y \times Y$. The pair $\langle Y, \pbelow \rangle$ is called a (binary) relational structure.
\begin{enumerate}
\item $\langle Y, \pbelow \rangle$ is a \emph{preorder} if and only if the relation $\pbelow$ is reflexive and transitive.
\item A preorder $\langle Y, \pbelow \rangle$ is a \emph{partial order} if and only if moreover $\pbelow$ is antisymmetric.
\item A partial order $\langle Y, \pbelow \rangle$ is \emph{linear} if and only if $\pbelow$ is total.
\item A preorder $\langle Y, \pbelow \rangle$ is \emph{prelinear} if and only if $\pbelow$ is total (so prelinear orders may fail to be antisymmetric).
\end{enumerate}
\end{dfn}

If $\langle Y, \pbelow \rangle$ is a preorder then the relation $\approx$ defined by $x \approx y$ if and only if $x \pbelow y$ and $y \pbelow x$ is an equivalence relation, and the preorder $\pbelow$ induces a partial order on
$Y / \approx$ which is linear if $\pbelow$ was prelinear. The word \emph{poset} may refer to either a preorder or a partial order; when antisymmetry is desired one may always take the quotient by $\approx$ in contexts where the word
``poset'' is used.

\begin{dfn}
An \emph{ordered Polish space} is a pair $\langle Y, \le \rangle$ with the following properties:
\begin{itemize}
\item $\langle Y , \le \rangle$ is a linear order (otherwise known as a total order).
\item $Y$ is a Polish space.
\item The Polish topology of $Y$ is the order topology induced by $\le$ (see e.g. \cite[5.15.f]{schechter-haf}).
\end{itemize}
\end{dfn}

In the sequel we shall assume without comment that all our ordered Polish spaces have at least two elements, and so $Y^\nat$ is infinite for every $Y$ under consideration.
The following proposition will allow us to reduce statements about general ordered Polish spaces to the important special case of the Cantor space $2^\nat$ with its lexicographic order.

\begin{prop} \label{polish-lo-cantor}
Let $\langle Y, \le \rangle$ be an ordered Polish space.
Then there is an order-preserving continuous embedding $f \mathrel{:} Y \rightarrow 2^\nat$ where $2^\nat$ has the lexicographic order.
\end{prop}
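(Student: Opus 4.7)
The plan is to exploit separability. Since $Y$ is a Polish space, it has a countable dense subset $D = \{d_n : n \in \nat\}$ in its order topology. The natural candidate for $f$ is the ``Dedekind-cut'' encoding: define $f \mathrel{:} Y \to 2^\nat$ coordinate-wise by $f(y)(n) = 1$ if $y > d_n$, and $f(y)(n) = 0$ otherwise.

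Verifying order-preservation under the lexicographic order proceeds in a clean and self-contained way. Given $y_1 < y_2$ in $Y$, density of $D$ yields some $d_n$ with $y_1 < d_n < y_2$, so $f(y_1)(n) = 0$ while $f(y_2)(n) = 1$ and the two images differ. Moreover, at any coordinate $m$ where $f(y_1)$ and $f(y_2)$ disagree, the case $f(y_1)(m) = 1$, $f(y_2)(m) = 0$ is excluded, since it would mean $d_m < y_1$ and $y_2 \le d_m$, contradicting $y_1 < y_2$. Hence at the first disagreement $f(y_1)$ shows 0 and $f(y_2)$ shows 1, giving $f(y_1) <_{\text{lex}} f(y_2)$. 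Injectivity is immediate from this, as is the fact that the inverse (on the image) preserves order.

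The main obstacle will be verifying that $f$ is continuous. Preimages of basic cylinders $[s] \subseteq 2^\nat$ are finite intersections of order rays of the form $\{y : y > d_n\}$ (which are open) and $\{y : y \le d_n\}$ (which are closed but not generally open). So the naive map can fail to be continuous exactly at points of $D$. To overcome this, I would refine the choice of $D$: rather than fixing an arbitrary countable dense set once and for all, I would construct the enumeration recursively along a binary tree of nested order-intervals, at each stage choosing the next $d_n$ so that it splits the current interval into two \emph{clopen} halves. The existence of such splitting points uses the order topology structure of $Y$ — in particular, the freedom to pick boundary points of the form ``just below'' or ``just above'' a desired cut. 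With a suitably chosen $D$ the cylinder preimages form a basis of open sets of the order topology on $Y$, making $f$ continuous and completing the embedding argument. The delicate step is guaranteeing, for each interval arising in the recursion, the availability of a dense point providing a clopen split.
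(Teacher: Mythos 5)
Your coordinatewise Dedekind-cut map is a genuinely different and more elementary route than the paper's, which instead order-embeds a countable dense set $D$ into the finitely-supported sequences of $2^\nat$ using the universality of the countable dense linear order without endpoints, and then passes to suprema over Dedekind completions. Your verification of order-preservation, however, has a gap at \emph{jumps}: if $y_1 < y_2$ but the interval $(y_1, y_2)$ is empty, topological density of $D$ does \emph{not} produce $d_n$ with $y_1 < d_n < y_2$, and $f(y_1) = f(y_2)$ can actually occur. For example, take $Y = [0,1] \cup [2,3]$ (an ordered Polish space) and $D = (\mathbb{Q} \cap [0,1)) \cup (\mathbb{Q} \cap (2,3])$, which is dense but omits $1$ and $2$; then no coordinate separates $1$ from $2$. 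The fix is exactly the first step of the paper's proof: the set $S$ of points with immediate successors or predecessors is countable by separability, and one must insist $S \subseteq D$; then $d_m = y_1$ itself witnesses a disagreement. The rest of your order-preservation argument (coordinatewise monotonicity forces the first disagreement to read $0$ for $y_1$ and $1$ for $y_2$) is correct.

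The more serious problem is the continuity repair. Your proposed recursion cannot be carried out: if $Y$ is connected (e.g.\ $Y = [0,1]$ or $Y = \Reals$, both ordered Polish spaces) then no nonempty interval admits a clopen split, and in fact no continuous injection of such a $Y$ into $2^\nat$ exists at all, since continuous images of connected sets are connected and the connected subsets of $2^\nat$ are singletons. So no choice of $D$ rescues continuity, and the word ``continuous'' in the statement is not attainable; note that the paper's own argument only establishes that the lifted map is Borel (it is justified there as ``Borel because it is order-continuous''), and Borel-ness is all that is used of this proposition in the sequel. Once you aim for a Borel order-embedding rather than a continuous one, your map (with $S \subseteq D$) finishes immediately: each coordinate $y \mapsto f(y)(n)$ is the indicator of the open ray above $d_n$, hence Borel, so $f$ is Borel, and injectivity together with the Luzin--Suslin theorem makes it a Borel isomorphism onto its image. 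In that corrected form your argument is, if anything, cleaner than the paper's.
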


\begin{proof}
Let $S$ be the set of all elements of $Y$ which have immediate successors or predecessors.
Because $Y$ is separable, $S$ is countable.
Hence we may choose a countable dense subset $D$ of $Y$ containing $S$.
Identify $2^{<\nat}$ with elements of $2^\nat$ having finite support, and note that the lexicographic order on $2^{<\nat}$ is dense and has no endpoints.
By the universality of dense linear orders without endpoints for countable linear orders (e.g. \cite[thm. 2.5]{rosenstein-lo}),
we may fix an order-embedding $g \mathrel{:} D \rightarrow 2^{<\nat}$.
This lifts to an order-preserving map $\hat g = f \mathrel{:} \hat Y \rightarrow 2^\nat$ of Dedekind completions, defined by
\[ \hat g(y) = \sup \{ g(d) \mathrel{:} d \in D \cap (-\infty, y] \}, \]
which is Borel because it is order-continuous.
It remains to show that $\hat g$ is injective.
For $x, y \in Y$, if $g(x) = g(y)$ then for every $d \in D$, $d < x$ iff $d < y$.
Suppose for contradiction that $x \ne y$, say $x < y$.
Because $D$ is dense and there is no $d \in D$ with $x < d < y$, it must be that the interval $(x, y)$ is empty.
But then $x, y \in S \subseteq D$ and $g(x) = g(y)$ imediately implies that $x = y$.
\end{proof}

The space $[0,1]$ would serve as our universal ordered Polish space as well, since it is biembeddable with the Cantor space $2^\nat$.
But $2^\nat$ fits better with standard definitions in descriptive set theory, and is in some sense a simpler space than $[0,1]$, so we use it.
We denote the group of finitely-supported permutations of $\nat$ by $S_{<\infty}$, and by $\EFinPerm$ the orbit equivalence relation induced by the action of this group on $(2^\nat)^\nat$ by permuting coordinates.
Note that, taking the utility space $Y = 2^\nat$, finite anonymity is precisely the condition of $\EFinPerm$-invariance.

\subsection{SEA Orders are not Lebesgue-measurable}

In~\cite{strong-equity} Dubey and Laguzzi observe that the existence of a SEA order implies the existence of a set without the Baire property.
This implies that there is no SEA order in $W$, but what is more, the consistency of the non-existence of a SEA order with $\ZF + \DC$ follows because there is a model due to Shelah which can be constructed in $\ZF$ alone and in
which all sets of reals have the Baire property~\cite{shelah-take-solovays-inaccessible-away}.
Dubey and Laguzzi leave open whether the existence of a SEA order implies the existence of a set of reals which is not Lebesgue-measurable; we use Fubini's theorem to prove that it does, and observe that the same argument with
Fubini's theorem replaced by the Kuratowski-Ulam theorem gives a simple proof that a SEA order does not have the Baire property.

\begin{prop}
Let $\pbelow$ be a SEA order on $4^\nat$.
Then $\pbelow$ is not Lebesgue-measurable as a subset of $4^\nat \times 4^\nat$.
\end{prop}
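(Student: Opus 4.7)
The plan is to assume $\pbelow$ is Lebesgue measurable in $4^\nat \times 4^\nat$ and derive a contradiction by combining finite anonymity (via Fubini and the Hewitt--Savage zero-one law) with strong equity (via a measure-preserving involution that realizes a single $\SE$-move on a cylinder). Let $m$ denote Lebesgue measure on $4^\nat$, identified with the product of uniform measures on $\{0,1,2,3\}$. Since $\pbelow$ is measurable, so are $\approx = \pbelow \cap \pabove$ and the strict part $\psbelow = \pbelow \setminus \approx$; write $A = \{(x,y) : x \psbelow y\}$.

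The first step is to show $m(\{(x,y) : x \approx y\}) = 1$. For a.e.\ $x$, Fubini gives measurability of the section $A_x = \{y : x \psbelow y\}$; finite anonymity implies $y \approx y \circ \pi$ for every $\pi \in S_{<\infty}$, hence $A_x$ is invariant under the coordinate-permutation action of $S_{<\infty}$, so the Hewitt--Savage zero-one law gives $m(A_x) \in \{0,1\}$. Setting $B = \{x : m(A_x) = 1\}$, a similar use of $x \approx x \circ \pi$ shows $A_x = A_{x \circ \pi}$, so $B$ itself is $S_{<\infty}$-invariant and hence $m(B) \in \{0,1\}$, while Fubini yields $m(A) = m(B)$. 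On the other hand, the swap $(x,y) \mapsto (y,x)$ preserves measure and sends $A$ to the disjoint set $\{(x,y) : y \psbelow x\}$ of equal measure; together with $\{(x,y) : x \approx y\}$ these partition the product, giving $2m(A) + m(\approx) = 1$ and thus $m(A) \le 1/2$. Combined with $m(A) = m(B) \in \{0,1\}$ this forces $m(A) = 0$, and by swap-symmetry the converse strict part also has measure $0$, so $m(\approx) = 1$ in the product.

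The second step refutes $m(\approx) = 1$ using strong equity. I construct a measure-preserving involution $T : 4^\nat \to 4^\nat$ that fixes every coordinate other than $0$ and $1$, swaps the pattern $(x(0),x(1)) = (0,3)$ with $(1,2)$, and is the identity on the remaining twelve first-two-coordinate patterns. On the cylinder $C = \{x : (x(0),x(1)) = (0,3)\}$ of measure $1/16$, the definition of $\SE$ gives $x \SE T(x)$ directly, so strong equity yields $x \psbelow T(x)$ and in particular $x \not\approx T(x)$. On the other hand, by Fubini, $m(\{(x,y) : x \approx y\}) = 1$ implies that for almost every $x_0$ the slice $E = \{y : x_0 \approx y\}$ has measure $1$; since $T$ is measure preserving, $T^{-1}(E)$ also has measure $1$, and for $y$ in the full-measure intersection $E \cap T^{-1}(E)$ transitivity of $\approx$ gives $y \approx T(y)$. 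But $C$ has positive measure and is disjoint from $\{y : y \approx T(y)\}$, contradicting the full-measure claim.

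I expect the main obstacle to be the choice of the involution $T$: the utility space having at least four elements is essential, since on $2^\nat$ any measure-preserving involution realizing an $\SE$-move on a positive-measure set would have to be the identity outside a null set, whereas with four values the paired inward compressions $(0,3) \leftrightarrow (1,2)$ at a single pair of coordinates package a strict $\SE$-move into an honest measure-preserving swap of two cylinders of equal measure. Once $T$ is in hand, the rest is a routine unwinding of Fubini, Hewitt--Savage, and transitivity of $\approx$.
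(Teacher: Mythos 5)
Your proposal is correct and follows essentially the same route as the paper: Lebesgue measurability plus Fubini and the ergodicity of the finite-permutation action (your Hewitt--Savage invocation is exactly this) forces the equivalence $\approx$ to be conull, and strong equity refutes that via a measure-preserving homeomorphism between disjoint positive-measure cylinders realizing a single $\SE$-move (the paper uses $[0,0,3,3] \to [0,1,2,3]$ where you use the $(0,3) \leftrightarrow (1,2)$ swap in two coordinates). Your swap-symmetry identity $2m(A) + m({\approx}) = 1$ is a slightly tidier way to rule out $m(A) = 1$ than the paper's two-case (null/conull) analysis, but the underlying ideas are the same.
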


\begin{lemma}
The relation $\EFinPerm \times \EFinPerm$ on $4^\nat \times 4^\nat$ is ergodic for Lebesgue measure.
\end{lemma}

\begin{proof}
Let $A \subseteq 4^\nat \times 4^\nat$ be Lebesgue-measurable and $(\EFinPerm \times \EFinPerm)$-invariant.
We must prove that $\lambda(A) \in \{0,1\}$, where $\lambda$ is the Lebesgue measure on $4^\nat \times 4^\nat$.
If $\lambda(A) = 0$ we are done, so assume that $\lambda(A) > 0$.
Fix $\varepsilon > 0$.
By the Lebesgue density theorem there are $K \in \nat$ and $s, t \in 4^{<\nat}$ with $|s| = |t| = K$ and such that
\[ \frac{\lambda(A \cap [s, t])}{\lambda{([s,t])}} > 1 - \varepsilon. \]
Here for $s', t' \in 4^{<\nat}$, $[s',t'] = \{ (x,y) \in 4^\nat \times 4^\nat : x \restriction |s'| = s' \wedge y \restriction |t'| = t' \}$.
The set
\[ B = \{(x, y) \in 4^\nat \times 4^\nat : \exists n \in \nat. \, \sigma^n(x) \restriction K = s \wedge \sigma^n(y) \restriction K = t \} \]
of pairs of points which eventually contain the patters $(s,t)$ (here $\sigma$ is the left shift) has measure $1$, and so we may replace $A$ by $A \cap B$.

For each $(x, y) \in B$ let $n_{x,y}$ be the least $n > K$ such that $\sigma^n(x) \restriction K = s$ and $\sigma^n(y) \restriction K = t$.
The map $(x, y) \mapsto n_{x,y}$ is measurable, and hence so is each set $C_n$ defined as the preimage of $n$ under the given map.
Note that the sets $\{C_n : n \in \nat\}$ are disjoint and $\bigcup_{n=K}^\infty C_n = 4^\nat \times 4^\nat$.
Moreover, for $s', t' \in 4^{<\nat}$ with $|s'| = |t'| = K$, $\lambda(C_n \cap [s',t']) = \lambda(C_n \cap [s,t])$ because we require $n_{x,y} > K$ for every $(x,y) \in B$.
Now for each $n > K$ let $\pi_n$ be the permutation of $\nat$ which interchanges the block $[0,K)$ with the block $[n,n+K)$.
Clearly the action of any permutation on $4^\nat \times 4^\nat$ by simultaneously permuting coordinates on both factors by the same permutation is measure-preserving.
Hence the map which acts on each $C_n$ by $\pi_n$ is also measure-preserving.
Therefore for any $s', t' \in 4^{<\nat}$ with $|s'| = |t'| = K$,
\begin{align*}
\lambda(A \cap [s',t']) &= \lambda\left( \bigcup_{n=K}^\infty (A \cap C_n \cap [s',t']) \right) & \\
&= \lambda\left( \bigcup_{n=K}^\infty \pi_n(A \cap C_n \cap [s', t']) \right) \\
&= \lambda\left( \bigcup_{n=K}^\infty (A \cap \pi_n(C_n) \cap [s, t]) \right) \\
&= \lambda(A \cap [s,t]),
\end{align*}
because acting simultaneously by $\pi_n$ on $C_n$ for each $n$ is measure-preserving, $A$ is invariant under finite permutations, $\bigcup_{n=K}^\infty \pi_n(C_n) = 4^\nat \times 4^\nat$,
and each $\pi_n$ acting on elements of $[s',t']$ replaces $s'$ by $s$ and $t'$ by $t$ in the first $K$ coordinates.
Consequently $\lambda(A) > 1 - \varepsilon$, and since $\varepsilon > 0$ was arbitrary, we conclude that $\lambda(A) = 1$.
\end{proof}

\begin{proof}[Proof of proposition]
Let $X = 4^\nat$ be the space of policies, and assume for contradiction that $\pbelow$ is Lebesgue-measurable as a subset of $X \times X$.
Because $\pbelow$ is finitely-anonymous, it is closed under $\EFinPerm$ in each coordinate.
By the lemma $\EFinPerm \times \EFinPerm$ is ergodic, so an $\EFinPerm \times \EFinPerm$-invariant set is either null or conull.
Note that $\EFinPerm \times \EFinPerm$ has Lebesgue measure zero as a subset of $4^\nat \times 4^\nat$ by Fubini's theorem, since each equivalence class and so each vertical and horizontal section is countable.

If $\pbelow$ is null, then by Fubini's theorem a conull collection of vertical sections are null.
Hence a conull collection of points in $X$ each have a conull collection of points above them.
This is only possible if a conull set of these points are pairwise equivalent, and so $\pbelow$ has a conull equivalence class.
But this is impossible, because there are disjoint open sets $U$, $V$ with each element in $U$ $\SE$-below an element of $V$.
For instance, following Laguzzi~\cite{strong-equity} we may take $U = [0,0,3,3]$, $V = [0,1,2,3]$.
The function $f \mathrel{:} U \rightarrow V$ defined by replacing $\langle 0, 0, 4, 4 \rangle$ by $\langle 0, 1, 2, 3 \rangle$ in the first four coordinates and leaving all other coordinates alone is a homeomorphism between
disjoint open sets which satisfies $x \SE f(x)$ for every $x \in U$.

Since $\pbelow$ is not null, it is conull, and in this case Fubini's theorem gives us that a conull collection of vertical sections are conull,
which means that a conull collection of points in $X$ have a conull collection of points below them.
Again this implies that $\pbelow$ has a conull equivalence class, which yields a contradiction as before.
\end{proof}

\subsection{Two Constructions of SEA Orders}

Dubey and Laguzzi in~\cite{strong-equity} observe that a nonprincipal ultrafilter over $\nat$ can be used to construct a SEA order.
Since they do not give a reference to an argument for this, we provide one here.

\begin{prop}
Assume there is a nonprincipal ultrafilter $U$ on $\nat$.
Then there is a SEA order on $(2^\nat)^\nat$.
\end{prop}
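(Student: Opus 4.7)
The plan is to use the ultrafilter $U$ to amalgamate a family of finite-level linear extensions of the strong-equity order into a single prelinear order on $(2^\nat)^\nat$.

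For each $n \in \nat$ I would first define a total preorder $\pbelow_n$ on $(2^\nat)^n$ as follows: given $a \in (2^\nat)^n$, let $a^\ast \in (2^\nat)^n$ denote the tuple obtained by sorting the coordinates of $a$ in decreasing lexicographic order, and declare $a \pbelow_n b$ iff $b^\ast \le_{\mathrm{lex}} a^\ast$, where $\le_{\mathrm{lex}}$ on $(2^\nat)^n$ denotes coordinatewise lexicographic comparison. Then $\pbelow_n$ is a total preorder (since $\le_{\mathrm{lex}}$ is linear on $(2^\nat)^n$) and is automatically invariant under coordinate permutations (since $a^\ast$ depends only on the multiset of values of $a$). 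The key finite-level claim is that $\pbelow_n$ already strictly respects the SE move: if $a \SE b$ via positions $i,j$, so $a(i) < b(i) < b(j) < a(j)$, then $b^\ast <_{\mathrm{lex}} a^\ast$. To verify this I would observe that the coordinates of $a$ and of $b$ strictly greater than $a(j)$ coincide as multisets (they are all untouched ``common'' coordinates, since $b(i), b(j) < a(j)$), so $a^\ast$ and $b^\ast$ begin with the same initial block of values strictly greater than $a(j)$, followed by a shared run of common coordinates equal to $a(j)$; at the next position, $a^\ast$ has $a(j)$ itself (coming from position $j$) while $b^\ast$ is forced to use a value strictly less than $a(j)$.

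Next I would lift these finite preorders to $(2^\nat)^\nat$ via the ultrafilter rule
\[ x \pbelow y \ \Longleftrightarrow\ \{n \in \nat : (x \restriction n) \pbelow_n (y \restriction n)\} \in U. \]
Standard ultrafilter arguments show $\pbelow$ is a prelinear preorder: totality comes from $U$ being an ultrafilter (the complement of the witnessing set witnesses $y \pbelow x$), transitivity from closure of $U$ under finite intersections, and reflexivity from reflexivity of each $\pbelow_n$. For finite anonymity, given a finitely supported permutation $\pi$ of $\nat$, for every $n$ past the support of $\pi$ the tuple $(y \circ \pi) \restriction n$ is a permutation of $y \restriction n$, so the sets witnessing $x \pbelow y$ and $x \pbelow (y \circ \pi)$ agree on a cofinite set, which lies in $U$ by nonprincipality. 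For strong equity, if $x \SE y$ via positions $i,j$, the finite-level claim gives $(x \restriction n) \psbelow_n (y \restriction n)$ for every $n > \max(i,j)$; this cofinite set is in $U$, so $x \psbelow y$, and since the resulting strict relation is transitive we also obtain $\LessSE \subseteq \psbelow$.

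The main obstacle is the finite-level combinatorial fact that the sort-then-lex construction strictly respects $\SE$, which requires careful bookkeeping when common coordinates tie with $a(j)$. Once this is in hand, the rest is a routine ultralimit amalgamation, with the nonprincipality of $U$ (equivalently, $U$ containing every cofinite set) supplying everything needed to lift finite anonymity and strong equity from the finite to the infinite setting.
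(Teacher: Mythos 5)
Your proposal is correct and takes essentially the same approach as the paper: compare the sorted initial segments $x \restriction n$ and $y \restriction n$ lexicographically and pass to the ultralimit along $U$, with finite anonymity coming from eventual agreement of the sorted tuples and strong equity from a strict finite-level inequality holding cofinitely often. Your use of decreasing sort with reversed lexicographic comparison is only a cosmetic variant of the paper's increasing sort, and your multiset bookkeeping for the finite-level $\SE$ claim is the same argument the paper sketches, just spelled out in more detail.
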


Since it will be used later, we make a definition before beginning the proof.

\begin{dfn} \label{dfn:sea-approx}
Let $Y$ be an ordered Polish space.
For $n \in \nat$, the order $\pless_n$ is defined by $x \pless_n y$ if and only if $x_{[n]} \le y_{[n]}$ in the lexicographic order of $Y^n$, where for $z \in Y^\nat$, $z_{[n]}$ is $z \restriction n$ written in sorted order,
meaning $(z \restriction n) \circ \pi$ for some permutation $\pi$ of $n$ such that for $i < j < n$, $((z \restriction n) \circ \pi)(i) \le ((z \restriction n) \circ \pi)(j)$.
\end{dfn}

It is straightforward to check that for each $n \in \nat$, $\pless_n$ is a prelinear order on $Y^\nat$.

\begin{proof}
Let $Y = 2^\nat$, so the space of policies is $Y^\nat$, and fix a nonprincipal ultrafilter $U$ on $\nat$.
The idea is to order points of $Y^\nat$ by the ultrafilter limit of their sorted initial segments.
Given $x, y \in Y^\nat$, define $x \pless y$ if and only if $\{ n \in \nat \mathrel{:} x \pless_n y \} \in U$, where finite tuples are compared lexicographically.
We shall show that $\pless$ is SEA.
It is a prelinear order because it is an ultralimit of the prelinear orders $\pless_n$.

To see that it is anonymous, suppose that $x \EFinPerm y$.
Then there is $N \in \nat$ such that for $n \ge N$, $x_{[n]} = y_{[n]}$.
Thus $x \pless_n y \pless_n x$ for $n \ge N$, and $x \pless y \pless x$ follows immediately from the fact that $U$ is nonprincipal and so contains all cofinite subsets of $\nat$.

It remains to verify strong equity.
Suppose that $x \SE y$, so $x$ and $y$ agree on all coordinates except $i$ and $j$, and that $x(i) < y(i) < y(j) < x(j)$.
Then for $n \ge i, j$, $x_{[n]}$ and $y_{[n]}$ agree up to the position where $x(i)$ appears in $x_{[n]}$, and at this position $y(i)$ appears in $y_{[n]}$.
Hence $x_{[n]} < y_{[n]}$, and since $n \ge i, j$ was arbitrary and $U$ is nonprincipal, $x \psless y$, as required.
\end{proof}

SEA orders can also be constructed from linear orders of $4^\nat / \VitEq$ or of $(2^\nat)^\nat / \EOne$, and in particular a transversal of $\VitEq$ is sufficient to construct a SEA order on $4^\nat$, the minimal space where the notion
of SEA order is nontrivial, and a transversal of $\EOne$ is sufficient to construct SEA orders for utilities drawn from any ordered Polish space.

\begin{prop} \label{lo-vitali-sea}
Suppose there is a linear order of $2^\nat / {\VitEq}$.
Then there is a SEA order on $4^\nat$ (and in fact on $n^\nat$ for any $n \ge 4$, including $n = \omega$).
\end{prop}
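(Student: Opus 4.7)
My plan is to build the SEA order in two layers: a coarse comparison between $\VitEq$-classes using the given linear order, and a within-class linear extension of the partial order generated by $\SE$.  Transfer the given $\leAlt$ on $2^\nat/\VitEq$ to a linear order on $4^\nat/\VitEq$, still written $\leAlt$, via the canonical Borel bijection $4^\nat \cong (2 \times 2)^\nat \cong 2^\nat$ obtained by interleaving bit-pairs; this bijection carries $\VitEq$ to $\VitEq$ since both relations are ``eventually equal''.  Since each SE move and each finite permutation changes only finitely many coordinates, both $\SE$ and $\EFinPerm$ are sub-relations of $\VitEq$ on $4^\nat$, so all comparisons that the SEA order must get right for SE- or $\EFinPerm$-related pairs happen within a single $\VitEq$-class $C$, which is countable.

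Within each such $C$, the relation $\SE$ descends to a strict partial order on the countable quotient $C/\EFinPerm$.  Antisymmetry comes from the monovariant $S_n(x) = |\{k < n \mathrel{:} x_k \in \{0, 3\}\}|$: it is $\EFinPerm$-invariant for $n$ past the support of any given finite permutation, and strictly decreases by $2$ across each SE move for $n$ beyond the two swapped coordinates, precluding any cycle through SE and $\EFinPerm$ steps.  Since $C/\EFinPerm$ is countable, the standard $\ZF$-provable greedy enumeration argument (Szpilrajn in the countable case) extends this to a strict linear order $<_C$ on $C/\EFinPerm$.  Combining, declare $x \pless y$ iff either $[x]_{\VitEq} \ne [y]_{\VitEq}$ and $[x]_{\VitEq} \leAlt [y]_{\VitEq}$, or $[x]_{\VitEq} = [y]_{\VitEq} = C$ with $[x]_{\EFinPerm} = [y]_{\EFinPerm}$ or $[x]_{\EFinPerm} <_C [y]_{\EFinPerm}$.

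The SEA axioms then follow by inspection: $x \SE y$ puts $x, y$ in the same $\VitEq$-class and in distinct $\EFinPerm$-classes with $[x]_{\EFinPerm}$ strictly $<_C$-below $[y]_{\EFinPerm}$ (since $<_C$ extends $\SE$), giving $x \psless y$; $x \EFinPerm y$ puts them in the same $\EFinPerm$-class, giving $x \approx y$; totality, reflexivity, and transitivity are inherited case-by-case from the analogous properties of $\leAlt$ and each $<_C$.  The main obstacle is obtaining the family $\{<_C\}_C$ definably across the uncountably many $\VitEq$-classes without invoking full $\AC$: each $<_C$ exists in $\ZF$, but coherent simultaneous choice requires care.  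I expect to handle this by parameterizing each $\VitEq$-class as a Borel orbit of the finite-support action of $\bigoplus_\nat \mathbb Z/4$ on $4^\nat$ and applying $\DC$ together with $\leAlt$ on $4^\nat/\VitEq$ as an organizing scaffold to construct the extensions uniformly.
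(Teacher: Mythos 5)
Your two-layer architecture (compare $\VitEq$-classes by the given linear order, then break ties within a class) is exactly the paper's, and your observations that $\SE$ and $\EFinPerm$ refine $\VitEq$ and that the monovariant rules out cycles are correct for $4^\nat$. The gap is in the second layer. You correctly identify the obstacle yourself: you need a linear extension $<_C$ of the $\SE$-induced partial order on $C/\EFinPerm$ \emph{simultaneously and coherently for all continuum-many classes} $C$, and your proposed repair does not close it. $\DC$ licenses only countably many dependent choices, so it cannot select a Szpilrajn extension (equivalently, an enumeration of the countable set $C/\EFinPerm$) for each of the $2^{\aleph_0}$ classes at once; and the hypothesized linear order $\leAlt$ on $2^\nat/\VitEq$ gives no information internal to a class, so it cannot serve as the ``organizing scaffold'' you want. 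Indeed a uniform choice of enumerations of the $\VitEq$-classes is essentially a transversal-like object, which is exactly the kind of thing the paper is at pains to avoid (and which fails in the models where this proposition is ultimately applied).

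The fix --- and the point where the paper diverges from you --- is to make the within-class order \emph{canonical} rather than chosen: the paper uses the prelinear orders $\pless_n$ of Definition~\ref{dfn:sea-approx}, comparing sorted initial segments $x_{[n]}$ and $y_{[n]}$ lexicographically, and observes that for $x \VitEq y$ this comparison stabilizes for large $n$. The resulting eventual order is uniformly definable with no choices, is finitely anonymous, strictly extends $\SE$ (replacing a $0$ by a $1$ lowers the sorted string lexicographically), and its induced equivalence on a $\VitEq$-class is exactly $\EFinPerm$ (two eventually-equal sequences with the same value-multiset on their finite difference set differ by a finite permutation), so it plays precisely the role you wanted your $<_C$ to play. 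If you replace your per-class Szpilrajn step with this canonical tie-breaker, the rest of your argument goes through; as written, the proof has a genuine hole at its final step. (A minor further point: your specific monovariant $S_n$ counting occurrences of $\{0,3\}$ does not generalize verbatim to $n^\nat$ for $n \ge 6$, where an $\SE$ move such as $1 < 2 < 3 < 4$ touches neither extreme value, so the parenthetical claim of the proposition would need a different invariant --- or, again, the canonical order, which handles all $n$ uniformly.)
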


\begin{proof}
A slight modification of the construction of a SEA order from a nonprincipal ultrafilter over $\nat$.
Fix $n \in \nat \cup \{\nat\}$ with $4 \le n \le \omega$ and a linear ordering of $2^\nat / \VitEq$.
Order elements of $2^\nat$ first by their $\VitEq$-classes and then by $\pless_n$ for $n$ sufficiently large (this is well-defined because $x$ and $y$ being ordered by $\pless_n$ are $\VitEq$-equivalent).
The relation so-defined is obviously finitely-anonymous, and it is easy to see that it is strongly equitable.
That it is in fact a prelinear order follows from the fact that if $x \LessSE y$, then $x \VitEq y$.
\end{proof}

\begin{prop} \label{lo-eone-sea}
Suppose there is a linear order of $(2^\nat)^\nat / \EOne$.
Then for any ordered Polish space $Y$, there is a SEA order on $Y^\nat$.
\end{prop}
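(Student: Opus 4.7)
The plan is to reduce to the case $Y = 2^\nat$ via Proposition \ref{polish-lo-cantor}, and then to imitate the construction of Proposition \ref{lo-vitali-sea}, with $\EOne$ playing the role of $\VitEq$ and with the sorted-initial-segment prelinear orders $\pless_n$ from Definition \ref{dfn:sea-approx} doing the work within each $\EOne$-class.

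For the reduction, let $f \mathrel{:} Y \rightarrow 2^\nat$ be the order-preserving embedding given by Proposition \ref{polish-lo-cantor}, and let $\hat f \mathrel{:} Y^\nat \rightarrow (2^\nat)^\nat$ be its coordinate-wise lift, $\hat f(x)(k) = f(x(k))$. This map is injective; it intertwines $\EFinPerm$ on the two spaces, since permutation of coordinates commutes with applying $f$ in each coordinate; and $x \SE y$ in $Y^\nat$ implies $\hat f(x) \SE \hat f(y)$ in $(2^\nat)^\nat$ because $f$ is strictly order-preserving. Consequently, the pullback along $\hat f$ of a SEA order on $(2^\nat)^\nat$ is a SEA order on $Y^\nat$, so it suffices to construct a SEA order on $(2^\nat)^\nat$.

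For the construction on $(2^\nat)^\nat$, fix a linear order $\le^*$ on $(2^\nat)^\nat / \EOne$ and declare $x \pless y$ iff either $[x]_{\EOne} <^* [y]_{\EOne}$, or $x \EOne y$ and there exists $N$ such that $x \pless_n y$ for all $n \ge N$. The definition is sensible because $\pless_n$ stabilises inside each $\EOne$-class: if $x(k) = y(k)$ for every $k \ge N$, then for $n \ge N$ the multisets underlying $x_{[n]}$ and $y_{[n]}$ are obtained by adjoining the common tail multiset $\{x(N), \dots, x(n-1)\}$ to the two fixed finite head multisets $\{x(0), \dots, x(N-1)\}$ and $\{y(0), \dots, y(N-1)\}$ respectively, and the lexicographic comparison of the resulting merges is eventually determined by the first cumulative-count discrepancy between the two head multisets.

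Finite anonymity and strong equity then go through almost verbatim from the ultrafilter proof. If $x \EFinPerm y$, then $x \EOne y$ and $x_{[n]} = y_{[n]}$ for all $n$ past the support of the witnessing permutation, whence $x \approx y$. If $x \SE y$ via coordinates $i, j$, then $x \EOne y$ and for every $n > \max(i,j)$ one verifies that $x_{[n]}$ and $y_{[n]}$ agree through the block of common value $x(i)$, after which $x_{[n]}$ continues with one further copy of $x(i)$ while $y_{[n]}$ must jump to something strictly larger than $x(i)$ (since $y(i), y(j) > x(i)$ and $x,y$ agree off $\{i,j\}$); hence $x \pless_n y$ and thus $x \psless y$. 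The main technical obstacle is the stabilisation of $\pless_n$ within an $\EOne$-class, but this is a routine combinatorial computation, entirely parallel to the corresponding step in the ultrafilter construction.
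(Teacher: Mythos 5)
Your proof is correct and follows the same route as the paper: reduce to $(2^\nat)^\nat$ via the coordinatewise lift of the embedding from Proposition~\ref{polish-lo-cantor}, then order first by $\EOne$-class and within a class by $\pless_n$ for large $n$, exactly as in Proposition~\ref{lo-vitali-sea}. The only difference is that you supply the details the paper leaves implicit (that the lift preserves $\EFinPerm$ and $\SE$, and that $\pless_n$ stabilizes on each $\EOne$-class because the comparison is governed by the least element of the multiset symmetric difference of the finite heads), and these details check out.
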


\begin{proof}
By proposition~\ref{polish-lo-cantor}
it suffices to show that there is a SEA order on $(2^\nat)^\nat$.
The argument proceeds in the same manner as that for the preceding proposition, only this time we
fix an ordering of $(2^\nat)^\nat / \EOne$ and order elements of $(2^\nat)^\nat$ first by their $\EOne$-classes and then by $\pless_n$ for sufficiently-large $n$.
\end{proof}

\section{Some Geometric Set Theory}

To make use of the machinery of geometric set theory, and in particular balanced forcing and its variants, it will be necessary to work
with forcing conditions in generic extensions.
In order for this to make sense, the forcing poset needs to be sufficiently-definable, and for our purposes that means it should be a
\emph{Suslin forcing}.

\begin{dfn}
A poset $\langle P, \le \rangle$ is \emph{Suslin} if and only if there is a Polish space $X$ over which $P$, $\le$, and $\perp$ are analytic.
\end{dfn}

The utility of the analyticity assumption is that Shoenfield absoluteness applies; more details and numerous examples can be found by following the references in~\cite{larson-zapletal-GST}.

Virtual conditions are, intuitively, objects which exist in $V$ and describe conditions of a Suslin forcing which are guaranteed to be
consistent across forcing extensions.
To formalize this, we start by defining objects in $V$, called $P$-pairs, which determine $P$-conditions in generic extensions of $V$.
Actually, we shall let $P$-pairs determine analytic sets of conditions in $P$, and for this we define an ordering on analytic subsets
of $P$ which is best thought of as ordering them by their suprema in a definable completion.

\begin{dfn}[{\cite[Definition 5.1.4]{larson-zapletal-GST}}]
For $A, B$ analytic subsets of a Suslin forcing $P$, the supremum of $A$ is below the supremum of $B$, denoted $\sum A \le \sum B$,
if and only if every condition below an element of $A$ can be strengthened to a condition below an element of $B$. In case $\sum A \le \sum B$
and $\sum B \le \sum A$, we write $\sum A = \sum B$.
\end{dfn}

\begin{dfn}[{\cite[def. 5.1.6]{larson-zapletal-GST}}]
A \emph{$P$-pair} for a Suslin forcing $P$ is a pair $\langle Q, \tau \rangle$ where $Q$ is a forcing poset and
$Q \Vdash \text{``$\tau$ is an analytic subset of $P$''}$.
\end{dfn}

The analytic set named in a $P$-pair is not guaranteed to have stable characteristics across generic extensions, an issue which the
notion of a $P$-pin seeks to resolve.

\begin{dfn}[{\cite[def. 5.1.6]{larson-zapletal-GST}}]
A $P$-pair $\langle Q, \tau \rangle$ for a Suslin forcing $P$ is a \emph{$P$-pin} if and only if
$Q \times Q \Vdash \sum \tau_\vartriangleleft = \sum \tau_\vartriangleright$, where
\[ \tau_\vartriangleleft = \{ \langle \sigma_\vartriangleleft, \langle p, q \rangle \rangle : \langle \sigma, p \rangle \in \tau, q \in P \} \]
is the lift of the name $\tau$ to the projection of a $Q \times Q$-generic filter to its left factor, and similarly for 
$\tau_\vartriangleright$.
As in~\cite{larson-zapletal-GST}, one may find it useful to think of these as the left and right copies of the name $\tau$.
\end{dfn}

\begin{dfn}
For $P$-pins $\langle P, \tau \rangle$, $\langle Q, \sigma \rangle$, define the relation of \emph{virtual equivalence} by
$\langle P, \tau \rangle \equiv \langle Q, \sigma \rangle$ if and only if $P \times Q \Vdash \sum \tau = \sum \sigma$.
Virtual conditions are equivalence classes of this relation.
\end{dfn}

That $\equiv$ is indeed an equivalence relation is established in \cite[Proposition 5.1.8]{larson-zapletal-GST}.
The intuition behind virtual conditions is that they describe (suprema of analytic sets of) conditions in a way that is independent
of the particular generic extension under consideration.
Note that for any poset $P$ and analytic subset $A \subseteq P$, the pair $\langle P, \check A \rangle$ determines a virtual
condition, so in particular $P$ embeds naturally into its set of virtual conditions (using the obvious observation that distinct
analytic subsets of $P$ determine distinct virtual conditions).

\begin{dfn}[{\cite[Definition 9.3.1]{larson-zapletal-GST}}]
Let $P$ be a Suslin forcing.
A virtual condition $\overline p$ of $P$ is \emph{placid} if and only if for all generic extensions $V[G]$, $V[H]$ such that $V[G] \cap V[H] = V$ and all conditions $p \in V[G]$, $q \in V[H]$, with $p, q \le \overline p$,
$p$ and $q$ are compatible.
$P$ is \emph{placid} if and only if for every condition $p \in P$ there is a placid virtual condition $\overline p \le p$.
The notions of \emph{balanced} virtual conditions and forcings are exactly analogous, with the requirement on the generic extensions $V[G]$, $V[H]$ strengthened to mutual genericity.
\end{dfn}

As we shall see in the course of the main proofs in this paper, balanced (and placid) pairs are of great utility in showing that specific
statements are forced, because if a statement is not decided by a balanced pair $\langle Q, \tau \rangle$ it is often possible to
use this fact to construct incompatible pairs below $\langle Q, \tau \rangle$.
In particular, a balanced pair for a forcing $P$ decides everything about the generic object for $P$ in the following sense:
\begin{thm}[{\cite[prop. 5.2.4]{larson-zapletal-GST}}]
Let $P$ be a Suslin poset and $\langle Q, \tau \rangle$ a balanced pair for $P$.
Then for any formula $\phi$ and parameter $z \in V$, one of the following holds:
\begin{itemize}
\item $Q \Vdash \Coll{\omega}{<\kappa} \Vdash \tau \Vdash_P W[\dot G] \models \phi(\dot G, \check z)$;
\item $Q \Vdash \Coll{\omega}{<\kappa} \Vdash \tau \Vdash_P W[\dot G] \models \phi(\dot G, \check z)$,
\end{itemize}
where $\dot G$ is the canonical $P$-name, in $V^Q$, for a $P$-generic filter.
\end{thm}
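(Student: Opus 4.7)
The plan is a contradiction argument driven by the balance hypothesis. (The two bullets in the dichotomy as printed read identically; I take the intended second alternative to be $\neg\phi$.) Assume that neither alternative holds. Then I can find $Q$-conditions $q_0, q_1$ such that below $q_0$ some extension in $\Coll{\omega}{<\kappa} * \dot P$ forces $W[\dot G] \models \phi(\dot G, \check z)$, while below $q_1$ some other extension forces $W[\dot G] \models \neg\phi(\dot G, \check z)$.

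The key step is to realize both horns simultaneously. I would force with $Q \times Q$ below $\langle q_0, q_1\rangle$ over $V$ to obtain mutually $V$-generic filters $H_0, H_1$, and then take a single $\Coll{\omega}{<\kappa}$-generic $g$ over $V[H_0, H_1]$ large enough that projections yield coherent collapse generics $g_0, g_1$ of the required shape over $V[H_0]$ and $V[H_1]$ respectively. The ambient symmetric Solovay model $W^* = \HOD(\Reals \cup V)^{V[H_0,H_1][g]}$ contains both models $W_i = \HOD(\Reals \cup V)^{V[H_i][g_i]}$ as inner models, and inside $W^*$ one can extract $P$-generics $G_0, G_1$ meeting $\tau[H_0]$ and $\tau[H_1]$ that witness $W_0[G_0] \models \phi(G_0, z)$ and $W_1[G_1] \models \neg\phi(G_1, z)$. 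Because $V[H_0]$ and $V[H_1]$ are mutually $V$-generic and every condition in $G_i$ lies below the virtual condition represented by $\langle Q, \tau\rangle$, the balance hypothesis delivers that any $p_0 \in G_0$ and $p_1 \in G_1$ are compatible in $P$. Hence $G_0 \cup G_1$ generates a filter that extends to a single $P$-generic $G$ over the relevant ground model inside $W^*$. Shoenfield absoluteness, applied to the analytic data defining $P$ and $\tau$ and to the evaluation of $\phi$ on $G$ and the ground parameter $z$, then propagates the disagreement to $W^*[G]$, which would have to satisfy both $\phi(G, z)$ and $\neg\phi(G, z)$, a contradiction.

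The hard part is the bookkeeping in the amalgamation step. Balance supplies only pairwise compatibility of conditions in $P$, not automatic common genericity, so one must verify that the filter generated by $G_0 \cup G_1$ is genuinely $P$-generic over a model in which both $\phi$-evaluations remain meaningful, and that the Solovay models $W_0, W_1$ embed coherently into $W^*$ so that satisfaction of $\phi$ transfers intact to the joint model $W^*[G]$. Both points rest on the Suslin character of $P$, which makes the ordering, the incompatibility relation, and the analytic set named by $\tau$ behave absolutely across the nested forcing iterations, together with the standard absoluteness features of the symmetric Solovay construction $\HOD(\Reals \cup V)$ in the Lévy collapse of the inaccessible $\kappa$.
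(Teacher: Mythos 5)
First, a point of reference: the paper does not prove this statement at all --- it is quoted verbatim (typo and all: the two displayed alternatives are identical, and you are right that the second should carry $\neg\phi$) from Larson--Zapletal, Proposition 5.2.4. So there is no in-paper proof to compare against; I am comparing your argument to the standard one in that source.

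Your opening move is right --- assume neither alternative holds, pass to mutually generic $H_0 \ni q_0$, $H_1 \ni q_1$ on $Q$, and aim to use balance to collide the two horns --- but the collision is carried out at the wrong level, and the step you flag as ``the hard part'' is where the argument genuinely breaks. The standard proof never constructs $P$-generic filters $G_0, G_1$ and never amalgamates them. Instead it extracts two \emph{conditions}: $p_0 \le$ some element of $\tau/H_0$ with $p_0 \Vdash_P W[\dot G] \models \phi$, and $p_1 \le$ some element of $\tau/H_1$ with $p_1 \Vdash_P W[\dot G] \models \neg\phi$, where each forcing statement is initially computed over the Solovay model of $V[H_i]$. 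A separate absoluteness lemma (Larson--Zapletal 5.2.2/5.2.3: the $P$-forcing relation over the symmetric Solovay model is unchanged when the intermediate extension below $\kappa$ is enlarged) shows both statements persist to the Solovay model $W^*$ of $V[H_0][H_1]$. Balance then gives a single condition $p \le p_0, p_1$, which forces both $\phi$ and $\neg\phi$ over $W^*$ --- a contradiction about the forcing relation, with no generic filter ever realized. Your version has two unrecoverable problems. (i) Even granting that $G_0 \cup G_1$ generates a filter (pairwise compatibility from balance does give directedness here, since $G_0$ and $G_1$ are individually filters), extending it to a filter that is $P$-generic over the relevant model is unjustified: $G_0$ is generic over $W_0$, not over $W^*$, and there is no reason a common extension $G$ should be generic over anything useful. (ii) The final appeal to Shoenfield absoluteness is not available: $\phi$ is an \emph{arbitrary} formula, not $\Sigma^1_2$, so $W_0[G_0] \models \phi(G_0, z)$ does not transfer to $W^*[G] \models \phi(G, z)$ --- and these are in any case assertions about different filters in different models. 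The content of the theorem is precisely that the \emph{forcing relation} is decided; trying to prove it by comparing truth in two concrete generic extensions puts the cart before the horse. You need the intermediate-extension absoluteness lemma for the Solovay-model forcing relation as a black box; with it, the proof is three lines and the amalgamation machinery disappears.
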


In order to demonstrate a forcing is balanced (or placid) it is often helpful to classify balanced pairs, and for that the following
equivalence relation is useful, as it provides a means of reducing balanced pairs to balanced virtual conditions.

\begin{dfn}[{\cite[Definition 5.2.5]{larson-zapletal-GST}}]
$P$-pairs $\langle Q, \tau \rangle$, $\langle R, \sigma \rangle$ are \emph{balance-equivalent}, denoted
$\langle Q, \tau \rangle \balanceEq \langle R, \sigma \rangle$, if and only if for all pairs
$\langle Q', \tau' \rangle \le \langle Q, \tau \rangle$, $\langle R', \sigma' \rangle \le \langle R, \sigma \rangle$,
\[ Q' \times R' \Vdash \exists q \in \tau' \ \exists r \in \sigma' \ \exists p. \ p \le q, r. \]
\end{dfn}

That $\balanceEq$ is indeed an equivalence relation is established in \cite[Proposition 5.2.6]{larson-zapletal-GST},
which also proves that if $\langle Q, \tau \rangle \le \langle R, \sigma \rangle$, then
$\langle Q, \tau \rangle \balanceEq \langle R, \sigma \rangle$.

An important property of balance equivalence is that every balance equivalence class includes a virtual condition, which is in fact
unique up to equivalence of virtual conditions, so when working with $P$-pairs up to balance equivalence it suffices to consider
virtual conditions.

\begin{prop}[{\cite[Theorem 5.2.8]{larson-zapletal-GST}}]
For any Suslin forcing $P$, every balance equivalence class of $P$-pairs includes a virtual condition which is unique up to
equivalence of virtual conditions.
\end{prop}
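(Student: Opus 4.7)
The plan is to treat existence and uniqueness separately, exploiting in both parts the fact that balance-equivalence is characterized by the existence of compatible common refinements of arbitrary strengthenings in product forcing extensions.

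For existence, given a $P$-pair $\langle Q, \tau \rangle$, I would construct a $P$-pin $\langle R, \sigma \rangle$ balance-equivalent to it, where $\sigma$ is built from ground-model data. Specifically, since $P$ is Suslin and $Q$ forces $\tau$ to name an analytic subset of $P$, I would apply Shoenfield absoluteness to extract a single analytic set $A \subseteq P$ in $V$ capturing the ``supremum behavior'' of $\tau$ — roughly, $A$ consists of those $p \in P$ for which $Q$ forces $\check p$ to lie below (or be compatible in a suitable sense with) the supremum of $\tau$. The pair $\langle P, \check A \rangle$ is automatically a $P$-pin because $\check A$ is interpreted identically in every generic extension, making the left and right lifts coincide trivially. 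Balance-equivalence with $\langle Q, \tau \rangle$ then reduces to showing that arbitrary strengthenings of each pair admit compatible common refinements in the product forcing extension, which follows from the construction of $A$ together with the analyticity of $P$, $\le$, and $\perp$.

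For uniqueness, suppose two pins $\langle Q_1, \tau_1 \rangle$ and $\langle Q_2, \tau_2 \rangle$ lie in the same balance-equivalence class. I want to show $Q_1 \times Q_2 \Vdash \sum \tau_1 = \sum \tau_2$, so that the two pins represent the same virtual condition. By symmetry it suffices to prove $\sum \tau_1 \le \sum \tau_2$: given any condition $p$ below some element of $\tau_1$ in $V^{Q_1 \times Q_2}$, I would apply balance-equivalence to a strengthening of $\langle Q_1, \tau_1 \rangle$ forcing $p$ to be the relevant witness, producing a condition compatible with some element of $\tau_2$. The pin property of each $\langle Q_i, \tau_i \rangle$ — namely $Q_i \times Q_i \Vdash \sum (\tau_i)_\vartriangleleft = \sum (\tau_i)_\vartriangleright$ — is then invoked to ensure that the witness extracted this way is a genuine strengthening of $p$ below an element of $\tau_2$, rather than merely compatible with one.

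The hard part will be the construction and verification of analyticity of the set $A$ in $V$. The subtle issue is that $\tau$ is a name for an analytic set that may genuinely vary across generic extensions of $V$ by $Q$, so one cannot simply ``pull back'' its realization; instead, Shoenfield absoluteness applied to an appropriate $\Sigma^1_2$ formula involving the analytic predicates for $P$, $\le$, and $\perp$ is what yields a stable $V$-definable analytic code. A second delicate point is the interplay in the uniqueness argument between balance-equivalence (about compatible strengthenings in product extensions) and the pin property (about two copies of a name agreeing in the same type of product extension), which must be combined carefully to conclude the supremum inequality $\sum \tau_1 \le \sum \tau_2$ rather than merely the weaker statement that some condition below $\tau_1$ is compatible with some condition below $\tau_2$.
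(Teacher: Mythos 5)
First, a point of reference: the paper does not prove this proposition at all --- it is quoted verbatim from Larson--Zapletal \cite[Theorem 5.2.8]{larson-zapletal-GST}, so there is no in-paper proof to match your argument against. Judged on its own merits, your uniqueness half is essentially the standard argument and is fine in outline: given balance-equivalent pins $\langle Q_1,\tau_1\rangle$, $\langle Q_2,\tau_2\rangle$ and a condition $p$ below an element of $\tau_1$ in the product extension, apply balance equivalence to the strengthening $\langle Q_1',\{\check p\}\rangle \le \langle Q_1,\tau_1\rangle$ against $\langle Q_2,\tau_2\rangle$ itself; the witness produced is a condition $s \le p$ lying below an element of $\tau_2$ (not merely compatible with one), and analytic absoluteness transfers this back to the $Q_1\times Q_2$ extension, giving $\sum\tau_1\le\sum\tau_2$ directly. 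Your worry about only obtaining compatibility is resolved exactly by choosing the strengthening on the left to be the singleton $\{\check p\}$.

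The existence half, however, contains a genuine error. You propose to find, in every balance class, a pin of the special form $\langle P,\check A\rangle$ with $A$ an analytic subset of $P$ coded in $V$. This is false in general, and no application of Shoenfield absoluteness can repair it: absoluteness transfers $\Sigma^1_2$ truth between models, but it cannot convert a $Q$-name for an analytic set whose code is genuinely generic into a ground-model analytic code with the same supremum. The whole point of virtual conditions is that they typically are \emph{not} realized by ground-model analytic sets. The paper's own forcing $P(\pbelow)$ illustrates this: its placid (hence balanced) virtual conditions are classified by total prelinearizations $\pless$ of $\pbelow$, represented by pairs $\langle \Coll{\omega}{X}, \pless\rangle$ whose name enumerates the countable suborders of $\pless$ in the collapse extension. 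If some ground-model analytic $A\subseteq P(\pbelow)$ had the same supremum, then (by the definition of $\sum A \le \sum\sigma$) every condition extending an element of $A$ would have to be further extendible into a countable suborder of $\pless$, forcing every condition below an element of $A$ to be contained in $\pless$; but any nonempty condition can be extended by a single pair decided the ``wrong way'' relative to $\pless$, and the empty $A$ fails the reverse inequality. The same obstruction appears for the $\VitEq$-transversal and linearization posets. The correct construction (as in Larson--Zapletal) produces a pin whose underlying poset is still a nontrivial forcing and whose name is manufactured from $\tau$ by a compatibility condition over a second, mutually generic copy of the extension --- the resulting analytic set lives in the extension, not in $V$.
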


The main utility for us of the notion of placidity is that it entails that there are no nonprincipal ultrafilters over $\nat$.

\begin{prop}[{\cite[Theorem 12.2.8,3]{larson-zapletal-GST}}] 
If $P$ is a placid Suslin forcing and $G$ is $W$-generic over $P$, then in $W[G]$ there is no nonprincipal ultrafilter over $\nat$.
\end{prop}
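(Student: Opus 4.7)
The plan is to argue by contradiction, assuming there is a nonprincipal ultrafilter $U$ on $\nat$ in $W[G]$. Since $W = \HOD(\Reals \cup V)$ as computed in the Lévy collapse of $\kappa$, standard Solovay model reflection gives a $P$-name $\dot U \in W$, a real parameter $z$, and a condition $p \in G$ forcing $\dot U$ to be a nonprincipal ultrafilter on $\nat$ definable from $z$ together with the $P$-generic filter. By placidity of $P$, I strengthen $p$ to a placid virtual condition $\bar p \leq p$ which continues to force this.

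The key step is to exploit the defining property of placidity: any two conditions $p_0, p_1 \leq \bar p$ lying in generic extensions $V[H_0], V[H_1]$ of $V$ satisfying $V[H_0] \cap V[H_1] = V$ are compatible in $P$. I would produce such a pair of extensions, each containing a fresh Cohen subset $c_i \subseteq \nat$, and choose conditions $p_i \leq \bar p$ in them. By placidity there is a common lower bound $q \leq p_0, p_1$ in the joint extension $V[H_0, H_1]$. Passing to a $W$-generic filter $G^* \ni q$ in a further collapse extension, the interpretation $U^* = \dot U^{G^*}$ is then forced to be a nonprincipal ultrafilter which must, in particular, decide membership of $c_0, c_1$, and all their Boolean combinations.

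The contradiction is then extracted by exploiting the canonical automorphism of the joint forcing that swaps $H_0$ and $H_1$ (and correspondingly $c_0, c_1$) while fixing $z$ and $\bar p$. Because $U^*$ is forced to be definable from $z$ and the generic, this automorphism must send the $U^*$-verdict on $c_0$ to the $U^*$-verdict on $c_1$, so the ultrafilter treats the two mutually generic Cohen subsets symmetrically. A direct combinatorial analysis of mutually Cohen subsets of $\nat$ -- in which the relevant symmetric Boolean combinations of $c_0, c_1$ form a finite/cofinite configuration incompatible with nonprincipality -- then yields the desired contradiction, by producing, below $\bar p$, two incompatible forcing statements about ultrafilter membership and thus violating placidity.

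The main obstacle is precisely the passage from the combinatorial symmetry to a genuine forcing-level contradiction: placidity only guarantees compatibility of conditions coming from independent extensions, not that a single ultrafilter must exist or fail to exist in one extension. The delicate point is to use both the independence $V[H_0] \cap V[H_1] = V$ and the $W$-definability of $\dot U$ to pin down the ultrafilter's verdicts on $c_0, c_1$ in a sufficiently rigid way that the symmetric constraint conflicts with nonprincipality; this is exactly the content of the cited theorem of Larson and Zapletal, and the technical heart of the proof lies there.
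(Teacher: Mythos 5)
The paper gives no proof of this proposition at all: it is quoted verbatim from Larson--Zapletal \cite{larson-zapletal-GST} (Theorem 12.2.8(3)) and used as a black box, so there is no in-paper argument to compare yours against. Judged on its own terms, your sketch assembles several correct ingredients (reflecting $\dot U$ to a name definable from a real $z$, passing to a placid virtual condition $\overline p \le p$, and trying to play two extensions against each other), but it has a genuine gap at exactly the point you flag as ``the main obstacle,'' and the specific route you propose for closing it does not work.

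Concretely: you take $c_0, c_1$ to be \emph{mutually generic} Cohen subsets of $\nat$ and claim that the symmetric Boolean combinations of $c_0, c_1$ form a ``finite/cofinite configuration incompatible with nonprincipality.'' That is false --- for mutually generic Cohen sets all four combinations $c_0 \cap c_1$, $c_0 \setminus c_1$, $c_1 \setminus c_0$, and $\nat \setminus (c_0 \cup c_1)$ are infinite, so a nonprincipal ultrafilter can consistently contain, say, both $c_0$ and $c_1$, and the swap-automorphism symmetry yields no contradiction. More structurally, your argument only ever invokes compatibility of conditions coming from \emph{mutually generic} extensions, which is the defining property of \emph{balance}, not placidity; and balance alone cannot suffice, since balanced Suslin forcings (e.g.\ $\powset{\nat}$ modulo finite, which adds a Ramsey ultrafilter) do add nonprincipal ultrafilters over $W$. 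The content of placidity that the Larson--Zapletal proof actually exploits is compatibility of conditions from pairs of extensions $V[H_0], V[H_1]$ satisfying only $V[H_0] \cap V[H_1] = V$ \emph{without} being mutually generic; one must manufacture such a non-mutually-generic pair carrying sets of integers whose forced $\dot U$-memberships conflict. Your sketch never produces such a pair, so the step from the symmetry observation to ``two incompatible forcing statements below $\overline p$'' is missing rather than merely deferred.
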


\section{SEA No (Nonprincipal) Ultrafilter!}

We now turn to the problem of adding SEA orders to the symmetric Solovay model $W$ without adding nonprincipal ultrafilters over $\nat$ or $\VitEq$-transversals.
The most straightforward way to achieve this is to add a linear order of $\EOne$ (for a SEA order on $(2^\nat)^\nat$; a linear order of $\VitEq$ suffices for a SEA order on $n^\nat$ for countable $n$).
A direct, placid-forcing approach yields a more general result.
Let $W_{\VitEq}$ and $W_{\EOne}$ be the generic extensions of the symmetric Solovay model by the quotient space linearization posets of~\cite{larson-zapletal-GST}, example 8.7.5, for the equivalence relations displayed.
By corollary 9.2.12 in~\cite{larson-zapletal-GST}, $ \card{2^\nat / \VitEq} > 2^{\aleph_0}$ in both of these models, and by corollary 9.3.16 these are both placid extensions and so neither contains
a nonprincipal ultrafilter over $\nat$.
By lemma~\ref{lo-vitali-sea} the model $W_{\VitEq}$ contains a SEA order, and lemma~\ref{lo-eone-sea} together with lemma~\ref{polish-lo-cantor} demonstrate that the model $W_{\EOne}$ contains a SEA order on $Y^\nat$ for every
ordered Polish space $Y$.
This answers the question of Dubey and Laguzzi~\cite{strong-equity} about whether the existence of a SEA order implies the existence of a nonprincipal ultrafilter over $\nat$.

\section{More General Prelinearization}

In the previous section we showed how to answer the question of Dubey and Laguzzi using forcing machinery already developed by Larson and Zapletal in~\cite{larson-zapletal-GST}.
However, the author originally proceeded by constructing a new forcing specifically to add a SEA order, and this construction generalizes to a wider class of prelinearization problems so is worthwhile to write down.

\begin{dfn}
Let $\pbelow$ be a preorder on a set $X$.
A prelinear order $\pless$ on $X$ \emph{prelinearizes $\pbelow$} if and only if ${\pbelow} \subseteq {\pless}$ and ${\pless} \cap {\breve{\pless}} = {\pbelow} \cap {\breve{\pbelow}}$.
The order $\pless$ \emph{weakly prelinearizes $\pbelow$} if and only if ${\pbelow} \subseteq {\pless}$ and ${\psbelow} \subseteq {\psless}$, where $\psbelow$ and $\psless$ are the strict versions of $\pbelow$ and $\pless$, respectively.
\end{dfn}

Consider some specific sort of object, such as a nonprincipal ultrafilter over $\nat$, which is frequently useful for constructing (weak) prelinearizations.
The general form of the question we are interested in is when a prelinear order from $W$ has a (weak) prelinearization in some generic extension that contains no object of the specified sort.
Our result in this context is that all Borel preorders satisfying a technical condition can be prelinearized in a generic extension of $W$ containing no nonprincipal ultrafilter over $\nat$.

\begin{dfn}
A Borel preorder $\pbelow$ on a Polish space $X$ is \emph{tranquil} if and only if for every pair of generic extensions $V[G]$, $V[H]$ satisfying $V[G] \cap V[H] = V$,
and for every pair of elements $x \in V[G]$, $y \in V[H]$, if $V[K]$ is such that $V[G], V[H] \subseteq V[K]$, then (as evaluated in $V[K]$) if $x \pbelow y$ then there is $z \in V$ with $x \pbelow z$ and $z \pbelow y$.
\end{dfn}

Note that if a Borel preorder $\pbelow$ satisfies that $V$ is \emph{interval-dense} in any generic extension, in the sense that any nonempty interval contains an element of $V$, then $\pbelow$ is tranquil.
Unfortunately this is not true for $({\LeqSE} \cup {\EFinPerm})^*$, but by extending the relation while preserving its induced equivalence relation we obtain a tranquil order whose prelinearizations are SEA orders.

\begin{dfn}
For $x, y \in (2^\nat)^\nat$, $x \sortLex y$ if and only if there is $N \in \nat$ such that $x(n) = y(n)$ for $n \ge N$ and there are permutations $\pi, \pi' \in S_N$ such that $x \circ \pi$ and $x \circ \pi'$ are each increasing
sequences, and either $x = y$ or $(x \circ \pi)(k) < (y \circ \pi')(k)$ for $k$ least such that $x(k) \ne y(k)$.
\end{dfn}

What is going on here is that $x$ and $y$ must eventually agree to be compared with $\sortLex$, and then we sort initial segments large enough to contain all coordinates with differences and compare these sorted initial segments
lexicographically.
It is easy to see that if $x \SE y$ then $x \sortLex y$, so ${\LeqSE} \subseteq {\sortLex}$.
Moreover, if $x \sortLex y \sortLex x$ then there is a finite permutation $\pi$ of $\nat$ such that $y = x \circ \pi$, so clearly ${\sortLex} \cap {\revSortLex} = {\EFinPerm}$.

\begin{prop}
The preorder $\sortLex$ on $(2^\nat)^\nat$ is tranquil.
\end{prop}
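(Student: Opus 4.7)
The plan is to leverage two key facts: (i) any chain witnessing $x \LeqSE y$ modifies only finitely many coordinates, and (ii) in the amalgamation-of-generics setting, $V$ is sufficiently dense in $2^\nat$ via a common-prefix argument.

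First, I would observe that if $V[G], V[H]$ are generic extensions with $V[G] \cap V[H] = V$, $x \in V[G]$, $y \in V[H]$, and $x \LeqSE y$ holds in a joint extension $V[K]$, then $F := \{n \in \nat : x(n) \neq y(n)\}$ is finite. Any witnessing chain of $\SE$-steps modifies only two coordinates per step, so the total set of coordinates moved is finite; hence for $n \notin F$ we have $x(n) = y(n) \in V[G] \cap V[H] = V$.

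Second, I would prove an auxiliary density lemma for $2^\nat$: for any $a \in V[G]$ and $b \in V[H]$ with $a < b$ in the lexicographic order, there is $c \in V$ with $a < c \leq b$. Indeed, letting $m = \min\{k : a(k) \neq b(k)\}$, we have $a(m) = 0$, $b(m) = 1$, and the common prefix $a \restriction m = b \restriction m$ lies in $V[G] \cap V[H] = V$. Taking $c$ equal to this prefix followed by a single $1$ and then all zeros puts $c \in V$ with $a < c \leq b$; a symmetric construction yields $c' \in V$ with $a \leq c' < b$.

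Third, I would build $z \in V$ coordinate-wise: for $n \notin F$, set $z(n) := x(n) = y(n) \in V$; for $n \in F$, apply the density lemma (in the direction determined by whether $x(n) < y(n)$ or $x(n) > y(n)$) to choose $z(n) \in V$ lying strictly between $x(n)$ and $y(n)$. Then I would verify $x \LeqSE z \LeqSE y$ by producing explicit chains in $V[K]$ that mimic the structure of a chain witnessing $x \LeqSE y$, substituting the chosen $V$-values at the appropriate coordinates of the initial and final $\SE$-steps.

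The main obstacle I expect is the verification step. It is not automatic that a coordinate-wise ``in between'' $z$ lies in the $\LeqSE$-interval $[x, y]$, because $\SE$-steps demand strict inequalities at two coordinates simultaneously, and the transitive closure imposes global majorization-style constraints on multisets of values. Care is needed when some $y(n)$ happens to lie in $V$ while others do not, and to ensure a chain from $z$ to $y$ exists even when $z$ and $y$ accidentally agree at some coordinates: since $\SE$-steps always modify two coordinates at once, one cannot move from $z$ to $y$ when they differ at only a single coordinate without simultaneously disturbing a third. This may force a more delicate coordinate-wise choice, ensuring $z$ differs from $y$ at every index in $F$ and that the strict $\SE$-compression inequalities at each step can be realized using $V$-values at both positions.
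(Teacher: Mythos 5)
Your proposal follows essentially the same route as the paper: observe that $x$ and $y$ differ on only a finite set of coordinates, note that the agreeing coordinates lie in $V[G]\cap V[H]=V$, interpolate at the differing coordinates using the density of the ($V$-coded, indeed finitely supported) points of $2^{<\nat}$ in the lexicographic order, and conclude $x \LeqSE z \LeqSE y$. The final verification you flag as the main obstacle is exactly the step the paper itself dispatches with ``clearly,'' so your more cautious treatment of the strictness and two-coordinates-per-step issues is, if anything, a more honest account of the same argument.
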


\begin{proof}
It is immediate from the definition of $\sortLex$ that it is Borel.
Now suppose that we have generic extensions $V[G]$, $V[H]$ with $V[G] \cap V[H] = V$, that $V[G], V[H] \subseteq V[K]$, and that $x \in V[G]$, $y \in V[H]$, and $x \sortLex y$.
The case $x \EFinPerm y$ is trivial because all finite permutations are in $V$, so assume that $y$ is not a finite permutation of $x$.
Choose $N \in \nat$ such that $x(n) = y(n)$ for $n \ge N$, and note that the common value is always in $V$.
Fix permutations $\pi, \pi' \in S_N$ with the property that each of the sequences $(x \restriction N) \circ \pi$ and $(y \restriction N) \circ \pi')$ are sorted.
Because $y$ is not a finite permutation of $x$, there is a least number $k < N$ such that $x(k) < y(k)$.
Choose $z_k \in 2^{<\nat}$ with $x(k) < z_k < y(k)$ (really $z_k$ is extended by a tail of zeroes to an element of $2^\nat$).
Define $z \in (2^\nat)^\nat$ as follows:
\[ z(n) = \begin{cases} x(n) & \text{for $n < k$} \\ z_k & \text{for $k \le n < N$} \\ x(n) & \text{for $n \ge N$.} \end{cases} \]
By construction $z \in V$ and $x \sortLex z \sortLex y$.
\end{proof}

We shall demonstrate by constructing a placid forcing that any tranquil Borel preorder has a preliearization in a model which contains no nonprincipal ultrafilter over $\nat$ and no $\VitEq$-transversal.

\begin{dfn}
Let $\pbelow$ be a Borel preorder on a subset of a Polish space.
The \emph{prelinearizing poset} $P(\pbelow)$ is the poset of (enumerations of) preorders on countable subsets of $X$ which prelinearize the corresponding restriction of $\pbelow$, ordered by extension.
\end{dfn}

Note that this poset is $\sigma$-closed because a countable union of conditions is a condition.
Hence it preserves $\DC$. 

\begin{prop}
For $\pbelow$ an analytic preorder on a subset of a Polish space, $P(\pbelow)$ is Suslin.
\end{prop}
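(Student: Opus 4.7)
The plan is to exhibit a Polish coding space for conditions of $P(\pbelow)$ and then verify that each of the three Suslin ingredients --- the set of conditions, the extension order, and the incompatibility relation --- is analytic over it.

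I will take the coding space to be $Y = X^\nat \times 2^{\nat \times \nat}$, which is Polish, and associate to $(\bar x, R) \in Y$ the preorder on $A = \{x_n \mathrel{:} n \in \nat\}$ defined by $x_n \pless x_m \Leftrightarrow n R m$. The combinatorial requirements on $R$ --- reflexivity, transitivity, prelinearity, and coherence with equalities in $\bar x$ --- are Borel. The prelinearization constraint defining $P(\pbelow)$ splits into two pieces: first, every $\pless$-equivalent pair is a $\pbelow$-equivalent pair, which is the countable conjunction of implications ``$n R m \wedge m R n \Rightarrow x_n \pbelow x_m \wedge x_m \pbelow x_n$'' and combines a Borel premise with an analytic conclusion, hence stays analytic; second, the requirement $\pbelow \restriction A \subseteq \pless$, which is the more delicate clause. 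Since $R$ is prelinear, the failure of the second clause is witnessed by a pair $(n,m)$ with $x_n \pbelow x_m$ holding and $\neg n R m$, so the failure itself is analytic. To make the clause itself (rather than its negation) analytic in the case where $\pbelow$ is only analytic, I would augment the coding with positive witnesses drawn from a Borel shadow of $\pbelow$, so that the required $\pbelow$-containments in $\pless$ become Borel-certifiable on the enlarged codes and yield an analytic set of conditions after projection.

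For the extension order, I would observe that $(\bar y, S) \le (\bar x, R)$ iff there is an index translation $\iota \in \nat^\nat$ with $x_n = y_{\iota(n)}$ and $n R m \Leftrightarrow \iota(n) S \iota(m)$; this existentially quantifies a Borel matrix over the Polish space $\nat^\nat$, hence is analytic. For incompatibility, I would use $\sigma$-closure of $P(\pbelow)$ to characterize $p \perp q$ as the existence, in the transitive closure of $\pless_p \cup \pless_q$ on $A_p \cup A_q$, of a pair that becomes $\pless$-equivalent through the closure but fails to be $\pbelow$-equivalent; the chain witnessing the closure-equivalence is a Borel datum, and the $\pbelow$-failure is again handled via the witness coding from the previous paragraph.

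The main obstacle I anticipate is that the naive description of $\pbelow \restriction A \subseteq \pless$ is coanalytic rather than analytic when $\pbelow$ is merely analytic, forcing the coding trick of folding positive $\pbelow$-witnesses into each condition; this bookkeeping then propagates into the incompatibility analysis, where one must characterize the failure of compatibility directly via the transitive-closure obstruction rather than as the negation of an existence statement.
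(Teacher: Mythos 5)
Your coding space, your treatment of the extension order via an index translation, and your observation that the clause ``$\pless$-equivalent implies $\pbelow$-equivalent'' is a countable conjunction of (Borel $\Rightarrow$ analytic) implications are all fine, and you have correctly located the one delicate point. But the repair you propose for that point does not work. Folding witnesses into the code can certify \emph{positive} instances $x_n \pbelow x_m$; the clause $\pbelow \restriction A \subseteq \pless$, however, is the conjunction over all pairs $(n,m)$ of ``$n R m$ or $\neg(x_n \pbelow x_m)$,'' so what must be certified is the \emph{negative} fact $\neg(x_n \pbelow x_m)$ for every pair that $R$ orders strictly the other way, and a properly coanalytic fact admits no witness to fold in. The same defect infects your incompatibility analysis, where ``fails to be $\pbelow$-equivalent'' is again a negative condition on $\pbelow$ (and the strict relation $\psbelow$ is itself only a difference of analytic sets when $\pbelow$ is analytic). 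In fact no coding can succeed at the stated level of generality: the set of two-element conditions declaring $x$ strictly below $y$ is, after projecting out the bookkeeping, exactly $\{(x,y) \mathrel{:} x \ne y, \ \neg(y \pbelow x)\}$, which is properly coanalytic for a suitable properly analytic $\pbelow$, yet it is a Borel image of the set of conditions and so would have to be analytic if $P(\pbelow)$ were.

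For comparison, the paper's own proof is a three-line assertion that every clause is ``clearly analytic because $\pbelow$ is,'' which silently assumes that negations of $\pbelow$-facts have the same complexity as $\pbelow$-facts; that is only true when $\pbelow$ is Borel, which is the only case the rest of the paper uses (tranquility is defined for Borel preorders). In the Borel case everything goes through exactly as you set it up, with no witness trick needed: all defining clauses are countable conjunctions of Borel conditions, and incompatibility is the existence of a finite cycle in ${<_p} \cup {<_q} \cup {\psbelow}$ through the countable set $\dom{p} \cup \dom{q}$, a countable union of Borel sets. So either add the Borel hypothesis or explain how to certify $\neg(x_n \pbelow x_m)$; as written, the central trick of your proposal fails at exactly the step you identified as the main obstacle.
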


\begin{proof}
All the requirements to be an element of $P(\pbelow)$ are clearly analytic because $\pbelow$ is.
Extension is analytic also in this context, because conditions are required to be countable.
Two conditions $\le_p$ and $\le_q$ in $P(\pbelow)$ are incompatible precisely when there is a cycle in the relation ${<_p} \cup {<_q} \cup {\psbelow}$, and this is clearly an analytic requirement.
\end{proof}

\begin{prop}
For $\pbelow$ a tranquil Borel preorder on a Polish space $X$, $P(\pbelow)$ is placid, with placid virtual conditions classified by total prelinearizations of $\pbelow$.
\end{prop}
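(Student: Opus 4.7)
The plan is to isolate a family of placid virtual conditions naturally indexed by total prelinearizations of $\pbelow$, deduce placidity of $P(\pbelow)$ via Zorn's lemma in the ambient universe, and then argue the converse classification.

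For the construction, take any total prelinearization $\le^*$ of $\pbelow$ on $X$ that lies in $V$; such an extension of any given condition $p \in P(\pbelow)$ exists by Zorn's lemma applied in the metatheoretic ZFC. Associate to $\le^*$ the $P(\pbelow)$-pair formed from the trivial forcing together with the check-name for the analytic set of countable conditions $r$ satisfying $\le_r = \le^* \restriction \text{dom}(r)$; call the resulting virtual condition $\overline{p}_{\le^*}$. The $P$-pin property is immediate because $\le^*$ lies in $V$, so the two copies of the name agree under any pair of generics.

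To see $\overline{p}_{\le^*}$ is placid, take generic extensions $V[G]$, $V[H]$ with $V[G] \cap V[H] = V$ and conditions $p \in V[G]$, $q \in V[H]$ below $\overline{p}_{\le^*}$. By construction, $\le_p$ and $\le_q$ both restrict $\le^*$, so the strict parts $<_p$ and $<_q$ both lie in the strict part $<^*$ of $\le^*$; moreover $\psbelow \subseteq {<^*}$ since $\le^*$ prelinearizes $\pbelow$. By the incompatibility characterization established in the previous proposition, incompatibility of $p$ and $q$ would produce a cycle in ${<_p} \cup {<_q} \cup {\psbelow} \subseteq {<^*}$, impossible since $<^*$ is the irreflexive, transitive strict part of a preorder. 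Hence $p$ and $q$ are compatible, $\overline{p}_{\le^*}$ is placid, and (since any condition extends in $V$ to some total prelinearization of $\pbelow$) $P(\pbelow)$ is placid.

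For the classification, given a placid virtual condition $\overline{p}$, I would define a candidate $\le^*$ on $X$ by declaring $x \le^* y$ iff some condition below $\overline{p}$, realized in a suitable generic extension, contains the pair $x \le y$. Placidity is precisely what makes the definition unambiguous: contradictory realizations could be placed in generic extensions with intersection $V$, forcing them to be compatible while simultaneously producing a two-cycle. Tranquility then enters to show this candidate $\le^*$ already lies in $V$ and is total, by reducing $\pbelow$-relations between points of different extensions to ground-model witnesses. This last step is the one I expect to be the main obstacle, since verifying both the totality of $\le^*$ and the fact that it lives at ground level requires a careful interleaving of tranquility with the $\sigma$-closure of $P(\pbelow)$; the placidity half of the proposition is, by contrast, just the clean cycle-chasing argument above.
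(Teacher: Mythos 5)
There is a genuine gap, and it sits exactly where you are most confident: the ``clean cycle-chasing argument'' for placidity. Your claim that $\le_p$ and $\le_q$ both restrict $\le^*$, so that a cycle witnessing incompatibility lies inside the strict part of $\le^*$, is false: the conditions $p \in V[G]$ and $q \in V[H]$ are countable preorders whose domains typically contain points of $V[G] \setminus V$ and $V[H] \setminus V$, and these points are not in the field of $\le^*$ (a prelinearization computed in $V$). So the cycle in ${<_p} \cup {<_q} \cup {\psbelow}$ can pass through new reals on either side and is not contained in ${<^*}$. This is precisely where the paper has to work: tranquility is invoked to interpolate a ground-model point into each $\psbelow$-link of the cycle, the hypothesis $V[G] \cap V[H] = V$ is used to show that any point lying in the fields of both $<_p$ and $<_q$ is in $V$, and after reducing to a cycle whose links alternate between the two sides one concludes that all the ``joints'' lie in $V$ and hence are linearly ordered by $\le^*$, producing a cycle in ${<^*}$. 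Your proposal never uses tranquility in the placidity half at all (you defer it to the classification half, where the paper in fact does not need it --- there placidity alone decides the order of pairs of ground-model points), so the key hypothesis of the proposition is doing no work in your argument.

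A secondary but real problem is your choice of $P$-pair. A total prelinearization $\le^*$ of a tranquil Borel preorder is in general not an analytic subset of $X^2$ (for $\LeqSE$ it cannot even be Lebesgue measurable, by the earlier proposition), so ``the analytic set of countable conditions $r$ with $\le_r = \le^* \restriction \dom{r}$'' is not analytic, and the trivial forcing with its check-name is not a legitimate $P$-pair. The paper sidesteps this by using $\langle \Coll{\omega}{X}, \pless \rangle$: after collapsing $X^V$ to be countable, the restriction of $\pless$ to $X^V$ is literally a single condition of $P(\pbelow)$, and a singleton is analytic. Your overall architecture (build placid virtual conditions from total prelinearizations, get density from Zorn's lemma in $V$, classify by deciding the order of ground-model pairs) matches the paper's, but as written both the construction of the virtual condition and the compatibility argument need repair.
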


\begin{proof}
First we check that if $\pless$ is a prelinearization of $\pbelow$, then $\langle \Coll{\omega}{X}, \pless \rangle$ is a placid pair, where $X = \dom{\pbelow}$.
So suppose $V[G_0]$, $V[G_1]$ are separately generic extensions of $V$ with $V[G_0] \cap V[G_1] = V$, and that for $i < 2$, ${\pless_i} \in V[G_i]$ is a condition in $P$ with
${\pless_i} \le \overline p = \langle \Coll{\omega}{X}, \pless \rangle$ in the ordering of pairs for $i < 2$.
Strengthening if needed, we may assume that $\overline p$ is a condition in each $V[G_i]$.
If ${\pless_0} \perp {\pless_1}$ then there is a cycle $C$ in ${\psless_0} \cup {\psless_1} \cup {\pbelow}$.
If an element $x$ of this cycle occurs in the field of both $\psless_0$ and $\psless_1$, then because $V[G_0] \cap V[G_1] = V$, $x \in V$.
For links of the form $x \psbelow y$ in the cycle, by tranquility there is an element $z \in V$ with $x \psbelow z \psbelow y$, and hence since $\pless_i$ are prelinearizations,
$x \psless_i z \psless_{1-i} y$ for some $i < 2$.
Links of the form $x \psless_i y \psless_i z$ can be reduced to $x \psless_i z$.
Hence by a simple induction we may assume without loss of generality that the links of the cycle alternate between $\psless_0$ and $\psless_1$.
But because $x \psless_i y \psless_{1 - i} z$ implies $y \in V$, and the $\pless_i$ each extend $\pless$, this entails that there is a cycle in $\psless$, a contradiction.

If ${\pless_0} \ne {\pless_1}$ are distinct prelinearizations of $\pbelow$, then clearly no partial prelinearization can extend both simultaneously, so the balanced virtual conditions
$\langle \Coll{\omega}{X}, \pless_0 \rangle$ and $\langle \Coll{\omega}{X}, \pless_1 \rangle$ are not balance-equivalent.

Now suppose that $\overline p = \langle Q, \dot R \rangle $ is a placid virtual condition; we must find a condition of the form $\langle \Coll{\omega}{X}, \pless \rangle$, with $\pless$  a prelinearization of $\pbelow$, which is
balance-equivalent to $\overline p$.
Strengthening if necessary, we may assume that $X^V$ is countable after forcing with $Q$.
By placidity $\overline p$ decides the order of any pair of elements of $X^V$.
Let $\pless$ be the relation $\{ (x, y) \in X^V \times X^V \mathrel{:} \overline p \Vdash \check x \dot R \check y \}$.
Then $\overline p \le \langle \Coll{\omega}{X}, \pless \rangle$, so $\overline p$ is balance-equivalent to $\langle \Coll{\omega}{X}, \pless \rangle$.

Given $p \in P(\pbelow)$, by transfinite induction carried out in $V$ there is a prelinearization $\pless$ of $\pbelow$ extending $p$, and so $\langle \Coll{\omega}{X}, \pless \rangle \le p$.
Since we already saw that this pair is placid, we conclude that the poset $P(\pbelow)$ is placid.
\end{proof}

\begin{cor}
If $\pbelow$ is a tranquil Borel preorder then there is a model of $\ZF + \DC$ in which $\pbelow$ has a prelinearization but there is no nonprincipal ultrafilter over $\nat$.
\end{cor}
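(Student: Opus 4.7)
The plan is to set $P = P(\pbelow)$ and force over $W$ with $P$. The preceding proposition establishes that $P$ is placid, and we have already verified that $P$ is Suslin and $\sigma$-closed. Let $G$ be $W$-generic for $P$ and pass to $W[G]$. Then $W[G] \models \ZF$ by standard forcing, and $W[G] \models \DC$ because $\sigma$-closed forcing preserves $\DC$.

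Since $P$ is $\sigma$-closed it adds no reals, so $X^{W[G]} = X^W$. Set $\pless = \bigcup G$. First I would verify by a density argument that $\dom(\pless) = X^{W[G]}$: for each $x \in X^W$ the set of conditions whose domain contains $x$ is dense, because any countable condition $\pless_q$ can be extended by inserting $x$ at a rank consistent with its $\pbelow$-relationships to the elements already present. Concretely, one takes $A = \{y \in \dom(\pless_q) : y \pbelow x\}$ and $B = \{y \in \dom(\pless_q) : x \pbelow y\}$, notes that transitivity of $\pbelow$ together with the prelinearization condition on $\pless_q$ force $A$ to be $\pless_q$-below $B$, and places $x$ in any slot between them (equivalent to existing elements precisely when such elements are $\pbelow$-equivalent to $x$). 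Because each condition in $G$ is a prelinearization of the restriction of $\pbelow$ to its domain, the nested union $\pless$ is itself a prelinearization of $\pbelow$ on all of $X^{W[G]}$.

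Finally, I would invoke the theorem cited from Larson and Zapletal that for any placid Suslin forcing $P$ and $W$-generic $G$, the extension $W[G]$ contains no nonprincipal ultrafilter on $\nat$. Combined with the preceding paragraph, $W[G]$ is a model of $\ZF + \DC$ in which $\pbelow$ has a prelinearization but no nonprincipal ultrafilter on $\nat$ exists.

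The main obstacle is essentially organizational rather than mathematical: all the substantive work has been done in the preceding propositions (tranquility implies placidity, $P(\pbelow)$ is Suslin and $\sigma$-closed, and the Larson--Zapletal placidity-versus-ultrafilters theorem). The one subtlety worth flagging is the density argument that the generic prelinearization is total; if one wanted to strengthen the corollary to add a linear order of a quotient rather than a prelinearization, one would need an analogous one-element extension lemma at that level and a corresponding reworking of the placidity proof.
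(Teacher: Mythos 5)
Your proposal is correct and is essentially the paper's own argument: force over $W$ with $P(\pbelow)$, use $\sigma$-closure to preserve $\DC$, use placidity (via the cited Larson--Zapletal theorem) to exclude nonprincipal ultrafilters, and observe that the generic union is a total prelinearization. The paper states this in one line; you have merely filled in the routine density argument that the paper leaves implicit.
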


\begin{proof}
Because $P(\pbelow)$ is placid and $\sigma$-closed, forcing with it over $W$ yields the desired model.
\end{proof}

The natural approach to proving that forcing with $P(\pbelow)$ does not add an $\VitEq$-transversal is the notion of compact balance as developed in~\cite{larson-zapletal-GST}.
Unfortunately, the poset $P(\pbelow)$ does not appear to be compactly balanced in any obvious way, but fortunately Paul Larson pointed out to me that in fact closure of placid conditions under ultralimits is sufficient.
Since the following result is very general, we state it in terms of balance rather than placidity, and in fact use the notion of \emph{cofinal balance}, which means for a Suslin poset $P$ that for every generic extension of $V$
by a poset of cardinality less than $\kappa$, there is a further generic extension by a poset of cardinality less than $\kappa$ in which $P$ is balanced.

\begin{prop} \label{ulim-cl-no-tr}
Let $\langle P, \le \rangle$ be a cofinally balanced Suslin forcing below the inaccessible cardinal $\kappa$ with the following properties in a cofinal set of generic extensions $V[H]$ in which $P$ is balanced:
\begin{enumerate}
\item \label{b-gen-ext} If $V[H, H_1] \subseteq V[H, H_2]$ are generic extensions of $V[H]$ then for every balanced virtual condition $\overline p_0 \in V[H, H_0]$ there is a balanced virtual condition $\overline p_1 \le \overline p_0$
in $V[H, H_1]$,
\item \label{b-ulim-cl} The balanced virtual conditions in $V[H]$ are closed under limits with respect to ultrafilters in $V[H]$.
\end{enumerate}
Then $W^P \models \lvert 2^\nat / \VitEq \rvert > 2^{\aleph_0}$.
\end{prop}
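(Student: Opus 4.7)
\emph{Plan.} The strategy adapts the argument for compactly balanced forcings (cf.\ Corollary 9.3.16 in \cite{larson-zapletal-GST}), with the ultralimit closure hypothesis~(2) taking the role of compactness. I will argue by contradiction: suppose $p \in P$ forces $\dot g \colon 2^\nat \to 2^\nat$ to be a $\VitEq$-invariant function injective on $\VitEq$-classes, witnessing an injection $2^\nat/\VitEq \hookrightarrow 2^\nat$ in $W^P$.

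By cofinal balance, pass to a generic extension $V[H]$ of size less than $\kappa$ in which $P$ is balanced and in which conditions (1) and (2) hold, and strengthen $p$ to a balanced virtual condition $\overline p_\ast \in V[H]$. Each balanced refinement $\overline q \leq \overline p_\ast$ (in $V[H]$ or, by property~(1), in a suitable outer extension) decides $\dot g(\check x)$ for $x$ in its ambient model. Pick reals $x_0, x_1 \in 2^\nat \cap V[H]$ lying in distinct $\VitEq$-classes on which $\overline p_\ast$ does not already force common values. The core step is to choose a sequence of balanced virtual conditions $\overline q_n \leq \overline p_\ast$ in $V[H]$, each deciding $\dot g(\check x_0) = y_0^n$ and $\dot g(\check x_1) = y_1^n$ for some $y_0^n, y_1^n \in V[H]$, together with an ultrafilter $U \in V[H]$ so that both sequences $(y_0^n)$ and $(y_1^n)$ converge along $U$ in the Polish topology of $2^\nat$ to a common real $z$. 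By property~(2), the ultralimit $\overline q := \lim_U \overline q_n$ is a balanced virtual condition in $V[H]$, and the standard ultralimit behaviour on $2^\nat$-valued decisions yields $\overline q \Vdash \dot g(\check x_0) = \check z = \dot g(\check x_1)$, contradicting injectivity on $\VitEq$-classes.

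The main obstacle is constructing the sequence $\overline q_n$ with the required convergence: one must show that below $\overline p_\ast$ in $V[H]$ there is enough flexibility among balanced refinements to force any compatible pair of target values for $\dot g(\check x_0), \dot g(\check x_1)$, and in particular values arbitrarily close to a prechosen common limit $z$. Property~(1) is the expected source of this flexibility, since balanced virtual conditions in outer extensions can make widely-varying decisions which must then be realised by (or descend to) conditions in $V[H]$. Making this descent precise, and checking that the ultralimit step genuinely collapses two $\VitEq$-classes rather than silently reproducing the rigid decisions already forced by $\overline p_\ast$ on reals of $V$, is the main technical hurdle.
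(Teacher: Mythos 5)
Your plan founders on a structural fact about balanced conditions that the paper itself records in the preliminaries (the theorem quoted from \cite[prop.~5.2.4]{larson-zapletal-GST}): a balanced pair decides \emph{every} statement about the generic with parameters from the ground model. Once you fix the balanced virtual condition $\overline p_\ast$ and ground-model reals $x_0, x_1 \in V[H]$, the values $\dot g(\check x_0)$ and $\dot g(\check x_1)$ are already completely decided by $\overline p_\ast$ (bit by bit), and every balanced refinement $\overline q_n \le \overline p_\ast$ must decide them the same way. So the sequences $(y_0^n)$ and $(y_1^n)$ are constant, the ultralimit merely reproduces $\overline p_\ast$'s decisions, and no collapse of the two classes can occur. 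The ``flexibility'' you hope to extract from hypothesis~(1) does not exist below a fixed balanced condition: that hypothesis lets you pass to \emph{stronger} balanced conditions in \emph{larger} models, all of which inherit the decisions already made. The worry you flag in your final paragraph --- that the ultralimit might ``silently reproduce the rigid decisions already forced by $\overline p_\ast$'' --- is not a technical hurdle to be overcome; it is exactly what happens, and it is unavoidable for ground-model inputs.

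The paper's proof avoids this by never evaluating the putative injection at a ground-model real. It adds a nonprincipal ultrafilter $U$ via the $\sigma$-closed poset $\langle [\nat]^\nat, \subseteq\rangle$ together with a Vitali-generic real $y$ (generic for the poset of Borel sets positive with respect to the $\sigma$-ideal generated by Borel partial $\VitEq$-transversals); since Vitali forcing adds no independent reals, $U$ still generates an ultrafilter in $V[K][U][y]$. Hypothesis~(\ref{b-gen-ext}) is used to descend from a balanced virtual condition $\overline p_0$ in $V[K][U]$ to a balanced virtual condition in $V[K][U][y]$, named by a $Q_V$-name $\dot{\overline{p}}_1$, and hypothesis~(\ref{b-ulim-cl}) is applied to the $U$-ultralimit of the conditions $\dot{\overline{p}}_1 / y_n$, where $y_n$ zeroes out the first $n$ coordinates of $y$. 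The point of that ultralimit is not to merge two prescribed classes but to produce a single balanced condition $\overline p_2$ depending only on the $\VitEq$-class of the \emph{generic} real $y$; the contradiction is then reached by the ergodicity argument of \cite[th.~9.2.2]{larson-zapletal-GST} applied to the resulting invariant name for the value of the injection at $[y]_{\VitEq}$. A repair of your argument would have to replace the ground-model pair $x_0, x_1$ with this generic-real machinery, at which point it becomes the paper's proof.
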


\begin{proof}
By the hypotheses of the proposition we may assume that $P$ is balanced and that properties (\ref{b-gen-ext}) and (\ref{b-ulim-cl}) hold in $V$.
Suppose for contradiction that there is a $P$-name $\dot f$ and a condition $p \in P$ such that
\[ p \Vdash \text{``$\dot f$ is an injection from $2^\nat / \VitEq$ into $2^\nat$''}. \]
Choose $z \in 2^\nat$ such that $p, \dot f$ are definable from $z$, and let $K$ be a filter $V$-generic for a poset in $V$ of cardinality less than $\kappa$ and chosen such that $z \in V[K]$.

Now let $Q_R$ be the poset $\langle [\nat]^\nat, \subseteq \rangle$ and $Q_V$ be Vitali forcing, which consists of Borel $I$-positive subsets of $2^\nat$, ordered by inclusion, where $I$ is the $\sigma$-ideal over $2^\nat$
generated by Borel partial $\VitEq$-transversals.
See \cite[fact 9.2.3]{larson-zapletal-GST} and the references therein for details on this forcing.
For $\langle U, y \rangle$ $V[K]$-generic with respect to $Q_R \times Q_V$, note that $U$ is a nonprincipal ultrafilter over $\nat$ and $y \in 2^\nat$ in the generic extension.
Moreover, since $Q_R$ is $\sigma$-closed, $V[K]^{Q_R}$ has the same Borel codes as $V[K]$ and thus has the same notion of Vitali forcing $Q_V$.
Therefore $y$ is also $V[K][U]$-generic for $Q_V$.
Since Vitali forcing adds no independent reals\footnote{Subsets of $\nat$ which neither contain nor are disjoint from any infinite subset of $\nat$ in $V$} (see \cite{shelah-zapletal-ramsey}),
the set $U$ in $V[K][U][y]$ generates an ultrafilter over $\nat$.

Working in $V[K][U]$, let $\overline p_0$ be a balanced virtual condition below $p$.
Using (\ref{b-gen-ext}), choose a $Q_V$-name $\dot{{\overline p}_1}$ for a balanced virtual condition below $\overline p_0$ which is in $P$ as evaluated in $V[K][U][y]$.
For $n \in \nat$ define $y_n(i)$ to be zero if $i \le n$ and $y_n(i) = y(i)$ otherwise, so $y_n$ is obtained by zeroing out the first $n$ entries of $y$.
It is clear that this modification does not affect the genericity of $y$ (over $V[K]$ or $V[K][U]$), and that for every $n$ $V[K][U][y_n] = V[K][U][y]$.
Working now in $V[K][U][y]$, let $\overline p_2$ be the ultrafilter limit of $\langle \dot{{\overline p}_1} / y_n \mathrel{:} n \in \nat \rangle$.
This is a balanced virtual condition of $P$ by (\ref{b-ulim-cl}), and clearly $\overline p_2 \le \overline p_0, \overline p$.
It is immediate from the definition of $\dot{{\overline p}_1}$ that the same virtual condition would be obtained from any point of $2^\nat$ $\VitEq$-equivalent to $y$.
A contradiction is now reached exactly as in the proof of \cite[th. 9.2.2]{larson-zapletal-GST}. 
\end{proof}

\begin{lemma}
An ultralimit of prelinearizations of a preorder $\pbelow$ on a set $X$ is a preorder of $\pbelow$.
\end{lemma}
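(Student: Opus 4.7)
I read the conclusion as asserting that the ultralimit is a \emph{prelinearization} of $\pbelow$ (the word ``preorder'' there appearing to be a typo), since this is what is needed for clause~(\ref{b-ulim-cl}) of Proposition~\ref{ulim-cl-no-tr} given the earlier classification of balanced virtual conditions of $P(\pbelow)$ by total prelinearizations. With that reading, the plan is to set up the ultralimit explicitly and verify each of the four clauses in the definition of ``prelinearization'' in turn. Given prelinearizations $\langle \pless_i : i \in I \rangle$ of $\pbelow$ and an ultrafilter $U$ on $I$, I define $x \pless y$ to hold iff $\{ i \in I : x \pless_i y \} \in U$.

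Reflexivity and transitivity of $\pless$ fall out of $U$ being a filter: reflexivity because each $\pless_i$ is reflexive, so the witness set is $I \in U$; and transitivity because if $x \pless y$ and $y \pless z$ then the two witness sets lie in $U$, so does their intersection, and any $i$ in it propagates via transitivity of $\pless_i$ to give $x \pless_i z$. Prelinearity (totality) is the one clause that uses maximality of $U$ rather than just the filter property: for fixed $x,y$, the sets $\{ i : x \pless_i y \}$ and $\{ i : y \pless_i x \}$ cover $I$, so the ultrafilter puts one of them into $U$. The inclusion ${\pbelow} \subseteq {\pless}$ is automatic since each $\pless_i$ already contains $\pbelow$, making the witness set all of $I$.

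The one clause requiring a little care is the equality of associated equivalence relations, ${\pless} \cap {\breve{\pless}} = {\pbelow} \cap {\breve{\pbelow}}$. The inclusion $\supseteq$ is immediate from ${\pbelow} \subseteq {\pless}$. For $\subseteq$, suppose $x \pless y$ and $y \pless x$; then both witness sets lie in $U$, so their intersection is in $U$ and hence nonempty, and any index $i$ in the intersection gives $x \pless_i y \pless_i x$, whence the corresponding clause for the prelinearization $\pless_i$ yields $x \pbelow y \pbelow x$. There is no genuine obstacle to overcome here—the argument is purely ultrafilter bookkeeping—but the clauses must be treated separately, because the symmetry clause above does not come for free from the inclusion ${\pbelow} \subseteq {\pless}$ and is the one place where a single ultrafilter-witness index is extracted and fed back into a property of a particular $\pless_i$.
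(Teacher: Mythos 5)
Your proposal is correct and follows essentially the same route as the paper: form the ultralimit, note that prelinearity and the inclusion ${\pbelow} \subseteq {\pless}$ pass through the ultrafilter immediately, and then handle the equivalence-class condition by extracting an index $i$ with $x \pless_i y \pless_i x$ and invoking the prelinearization property of $\pless_i$. Your reading of ``preorder of $\pbelow$'' as a typo for ``prelinearization of $\pbelow$'' matches the paper's intent, and your observation that a single witness index suffices for the last clause is a minor streamlining of the paper's phrasing rather than a different argument.
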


\begin{proof}
Let $\langle \pless_n \mathrel{:} n \in \nat \rangle$ be a sequence of prelinearizations of $\pbelow$, and fix a nonprincipal ultrafilter $U$ over $\nat$.
Take $\pless$ to be the ultralimit of the sequence, so for $x, y \in X$, $x \pless y$ if and only if $\{ n \in \nat \mathrel{:} x \pless_n y \} \in U$.
This is a prelinear order by the standard ultralimit argument, and clearly ${\pbelow} \subseteq {\pless}$ since this holds for each $\pless_n$.
It remains to check that if $x \pless y \pless x$, then $x \pbelow y \pbelow x$.
Suppose $x \pless y \pless x$, so
\[ \{ n \in \nat \mathrel{:} x \pless_n y \pless_n x \} \in U. \]
Since the relations $\pless_n$ are prelinearizations, if $x \pless_n y \pless_n x$ then $x \pbelow y \pbelow x$.
Hence
\[ \{ n \in \nat \mathrel{:} x \pbelow y \pbelow x \} \in U, \]
which means precisely that $x \pbelow y \pbelow x$.
\end{proof}

\begin{cor}
If $\pbelow$ is a tranquil Borel preorder then there is a model of $\ZF + \DC$ in which there is a prelinearization of $\pbelow$ but no $\VitEq$-transversal and no nonprincipal ultrafilter over $\nat$.
In particular, this holds for $\LeqSE$.
\end{cor}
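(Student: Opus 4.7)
The plan is to take the model $W^{P(\pbelow)}$ and verify all three desired properties in this extension. Prelinearization of $\pbelow$ in the extension and the absence of a nonprincipal ultrafilter on $\nat$ were already obtained in the preceding corollary, since $P(\pbelow)$ is placid and $\sigma$-closed. The only new content is the absence of a $\VitEq$-transversal, and for this I would invoke Proposition~\ref{ulim-cl-no-tr}, whose conclusion $W^P \models \card{2^\nat / \VitEq} > 2^{\aleph_0}$ rules out such a transversal.

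To apply Proposition~\ref{ulim-cl-no-tr} to $P(\pbelow)$, three things must be checked: Suslin cofinal balance below $\kappa$, together with properties (\ref{b-gen-ext}) and (\ref{b-ulim-cl}). Suslinity was already established, and cofinal balance follows from placidity, since placid virtual conditions are a fortiori balanced and the proof that every condition has a placid virtual condition beneath it is absolute across generic extensions, so the forcing remains balanced in every generic extension. For property (\ref{b-gen-ext}), I would use the classification of placid virtual conditions as total prelinearizations of $\pbelow$: given a total prelinearization in an inner extension $V[H,H_1]$, a transfinite induction carried out in the outer extension $V[H,H_2]$ extends it to a total prelinearization of $\pbelow$ as evaluated in $V[H,H_2]$, and this extension is a balanced virtual condition refining the original. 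For property (\ref{b-ulim-cl}), I would invoke the ultralimit lemma immediately preceding this corollary: ultralimits of prelinearizations of $\pbelow$ are again prelinearizations, hence are again balanced virtual conditions.

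The final clause, that this applies in particular to $\LeqSE$, is immediate from the earlier proposition showing $\LeqSE$ is tranquil. The main obstacle is the subtle interplay between balanced virtual conditions living in different generic extensions required for property (\ref{b-gen-ext}); however, the classification of these conditions as total prelinearizations reduces the question to an elementary extension-by-transfinite-induction argument, after which the verification is routine. Everything else is assembly of results already proved in the preceding sections.
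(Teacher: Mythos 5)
Your proposal is correct and follows essentially the same route as the paper: force with $P(\pbelow)$ over $W$, cite the previous corollary for $\DC$ and the absence of a nonprincipal ultrafilter, and combine the classification of placid virtual conditions as total prelinearizations with Proposition~\ref{ulim-cl-no-tr} and the ultralimit lemma to exclude a $\VitEq$-transversal. Your explicit verification of hypotheses (\ref{b-gen-ext}) and (\ref{b-ulim-cl}) simply spells out what the paper leaves implicit.
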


\begin{proof}
The model obtained by forcing over $W$ with $P(\pbelow)$ will witness this.
We already saw that this model satisfies $\DC$ and contains no nonprincipal ultrafilter over $\nat$ in the proof of the last corollary.
Combining the classification of placid virtual conditions for $P(\pbelow)$ with proposition~\ref{ulim-cl-no-tr} and the last lemma yields that this model also contains no $\VitEq$-transversal.
\end{proof}

\section{Future Work}

Dubey and Laguzzi also define the notion of an ANIP social welfare order, which is finitely-anonymous and \emph{infinite Pareto}, meaning that if $x \le y$ coordinatewise and $x_i < y_i$ on infinitely-many coordinates $i$, then
$y$ is required to be strictly preferred to $x$ in the social welfare order.
They ask whether the existence of such an order implies the existence of a nonprincipal ultrafilter over $\nat$, and showing that it does not is beyond our current methods since the order defined by the infinite Pareto requirement
is not tranquil.
Hence the question about ANIP orders analogous to our main result about SEA orders remains open.

There is also the possibility of generalizing our results about prelinearizing tranquil Borel preorders in models with no nonprincipal ultrafilters over $\nat$ or $\VitEq$-transversals to broader classes of preorders, for example all
Borel preorders or all analytic preorders.
It would be intriguing if there is an obstruction, i.e. a Borel or at least analytic preorder such that $\DC$ suffices to prove that the existence of a prelinearization implies the existence of a nonprincipal ultrafilter over $\nat$,
or of a $\VitEq$-transversal.
The natural analogue of this question for weak prelinearization may also be of interest.

Another direction is to investigate SEA and related orders with utilities from an arbitrary definable (e.g. Borel or analytic) linear order on a Polish (or Suslin) space; what has been dropped is the requirement that
the space of utilities be an ordered Polish space.
In this context $2^\nat$ is no longer universal in the sense of proposition~\ref{polish-lo-cantor}, as witnessed by $[0,1] \times [0,1]$ with the lexicographic order, which is nonseparable
and therefore does not embed into the separable space $2^\nat$.

\bibliographystyle{plainurl}
\bibliography{math}

\end{document}